\documentclass[11pt]{article}

\usepackage[margin=1in]{geometry}
\usepackage[T1]{fontenc}
\usepackage[utf8]{inputenc}
\usepackage{mathtools,amsthm}
\IfFormatAtLeastTF{2026-06-01}{}{\usepackage{aliascnt}}
\usepackage{lmodern,amssymb,mathrsfs}
\usepackage{microtype}
\usepackage{booktabs,array,tabularx}
\usepackage{enumitem}
\usepackage[backend=biber,style=authoryear,natbib=true,maxcitenames=2,maxbibnames=99,uniquelist=false]{biblatex}
\usepackage{xcolor}
\usepackage{hyperref}
\usepackage[nameinlink,capitalize,noabbrev]{cleveref}
\usepackage{titlesec}

\definecolor{linkblue}{RGB}{24,76,140}
\hypersetup{
  hidelinks,
  pdftitle={Sharp Minimax Rates for Highly Smooth Strongly Convex Zeroth-Order Optimization}
}
\setlist[itemize]{leftmargin=1.35em,itemsep=0.15em,topsep=0.25em}
\setlist[enumerate]{leftmargin=1.5em,itemsep=0.15em,topsep=0.25em}
\titleformat{\section}{\large\bfseries}{\thesection}{0.55em}{}
\titleformat{\subsection}{\normalsize\bfseries}{\thesubsection}{0.5em}{}
\allowdisplaybreaks[2]
\newtheorem{theorem}{Theorem}[section]
\IfFormatAtLeastTF{2026-06-01}{%
  \newcommand{\sharedtheorem}[2]{\newtheorem{#1}[theorem]{#2}}%
}{%
  \newcommand{\sharedtheorem}[2]{%
    \newaliascnt{#1}{theorem}%
    \newtheorem{#1}[#1]{#2}%
    \aliascntresetthe{#1}}%
}
\sharedtheorem{lemma}{Lemma}
\sharedtheorem{proposition}{Proposition}
\sharedtheorem{corollary}{Corollary}
\theoremstyle{definition}
\sharedtheorem{definition}{Definition}
\theoremstyle{remark}
\sharedtheorem{remark}{Remark}
\crefname{theorem}{Theorem}{Theorems}
\crefname{lemma}{Lemma}{Lemmas}
\crefname{proposition}{Proposition}{Propositions}
\crefname{corollary}{Corollary}{Corollaries}
\crefname{definition}{Definition}{Definitions}
\crefname{remark}{Remark}{Remarks}

\newcommand{\R}{\mathbb{R}}
\newcommand{\E}{\mathbb{E}}

\newcommand{\cA}{\mathcal{A}}
\newcommand{\cF}{\mathcal{F}}
\newcommand{\cE}{\mathcal{E}}
\newcommand{\cH}{\mathcal{H}}
\newcommand{\eps}{\varepsilon}
\newcommand{\op}{\mathrm{op}}
\newcommand{\KL}{\operatorname{KL}}
\newcommand{\TV}{\operatorname{TV}}
\newcommand{\Ham}{\operatorname{Ham}}

\newcommand{\Ball}{B_2^d}
\newcommand{\norm}[1]{\left\lVert #1\right\rVert}
\newcommand{\ip}[2]{\left\langle #1,#2\right\rangle}
\newcommand{\abs}[1]{\left|#1\right|}

\title{\textbf{Sharp Minimax Rates for Highly Smooth Strongly Convex Zeroth-Order Optimization}}
\newcommand{\authornames}{Haihan Zhang, Wendao Wu, Chenheng Zhang, Yanyi Li, Chunyuan Zheng, Cong Fang, Haoxuan Li, Zhouchen Lin}
\newcommand{\affiliation}{Peking University}
\newcommand{\contactemail}{colombo.2026@outlook.com}
\newcommand{\mail}[1]{\href{mailto:#1}{\texttt{#1}}}
\newif\ifanonymous
\anonymousfalse
\ifanonymous
\author{Anonymous Authors}
\hypersetup{pdfauthor={Anonymous Authors}}
\else
\author{Haihan Zhang$^{1,*}$\quad Wendao Wu$^{1,*}$\quad Chenheng Zhang$^{1,*}$\\[0.20em]
Yanyi Li$^1$\quad Chunyuan Zheng$^1$\\[0.20em]
Cong Fang$^{1}$\quad Haoxuan Li$^{1,\dagger}$\quad Zhouchen Lin$^{1,\dagger}$\\[0.60em]
\small $^1$\affiliation\\[0.35em]
\footnotesize\mail{\contactemail}\\[0.35em]
\small $^*$Equal contribution.\quad $^\dagger$Corresponding authors.}
\hypersetup{pdfauthor={\authornames}}
\fi
\date{}
\newcommand{\ResearchAgentSystem}{our laboratory's internal auto-research system}

\begin{document}
\maketitle

\begin{abstract}
We determine the minimax dimension dependence of noisy zeroth-order optimization for globally strongly convex functions with higher-order smoothness in multilinear operator norm. Each query returns one function value with fresh independent Gaussian noise. For dimension $d$, query budget $T$, and any fixed finite smoothness order $\beta>2$, the minimax expected objective error satisfies
\[
  \cE_\beta(T,d)
  =\Theta_\beta\!\left(\min\left\{1,
  \left(\frac{d^2}{T}\right)^{(\beta-1)/\beta}\right\}\right),
\]
with dimension-independent strong-convexity, gradient-smoothness, higher-order-smoothness, noise, and minimizer-radius bounds. Thus, for sufficiently small target error $\eps$, the necessary and sufficient number of noisy values is $\Theta_\beta(d^2\eps^{-\beta/(\beta-1)})$, with no logarithmic loss. The new lower bound matches the dimension and budget dependence of the upper bound of \citet{akhavan2024gradientfree}, and holds for arbitrary randomized adaptive algorithms with unrestricted query points. Its key step localizes a normalized hypercube with one radial cutoff: a query can emphasize one hidden coordinate, but the aggregate information about all coordinates remains small. A self-contained multiscale spherical estimator attains the matching upper bound by cancelling lower-order bias terms. Constants in the rates depend only on the fixed smoothness order under the stated normalization.
\end{abstract}

\noindent\textbf{Keywords:} Zeroth-order optimization; minimax complexity; higher-order smoothness; strong convexity; adaptive lower bounds.

\begin{samepage}
\noindent\textbf{AI Usage.}
Nearly the entire research pipeline for this paper was carried out by
\ResearchAgentSystem{}, powered by GPT-5.6 Sol. The system also conducted a
Lean-backed article audit of the resulting manuscript. The authors subsequently reviewed and approved the
mathematical claims, presentation, and formal artifacts, and take
responsibility for the final manuscript. The complete Lean audit report and the system's technical report will be made public at a later date.
\par
\end{samepage}

\paragraph{Publication Intent and Recognition of Independent Work.}

This preprint is released as a research output of our study of
automated scientific research, for public scrutiny and discussion.
As we do not intend to submit this manuscript to a journal or
conference, it has not undergone the full editorial refinement
and internal review we would normally require for submission.
The authors remain responsible for its contents and for correcting
any errors or inaccurate attribution.

We recognize that releasing these results may affect researchers
who have independently been pursuing the same questions, including
work that is not yet public.
If this applies to your work, we invite you to contact us at
\texttt{colombo.2026@outlook.com}.
A brief description of the overlap, together with any dated
materials you are comfortable sharing, would help us understand
its scope and chronology.
Such materials might include draft excerpts, presentation slides,
or revision records; a complete manuscript or unpublished proof
is not required to begin the conversation.

We will review these cases carefully and update citations,
acknowledgments, and accounts of independent or concurrent work
as appropriate, including revising priority claims where warranted.
Nonpublic materials will be treated confidentially and used only
to assess the overlap unless their authors authorize otherwise.
We will obtain explicit permission from the authors before
publicly discussing such materials or providing them to an
automated research system.

\section{Introduction}\label{sec:intro}

We consider the optimization problem
\[
  \min_{x\in\R^d} f(x),
\]
where $f$ is strongly convex, has a Lipschitz gradient, and has higher-order smoothness of order $\beta>2$. Access to $f$ is limited to noisy values: a query at $z\in\R^d$ returns $f(z)+\xi$, with fresh independent noise $\xi\sim N(0,1)$. After $T$ adaptively chosen queries, the algorithm returns a point $\widehat x$. We seek the smallest worst-case expected objective error $\E[f(\widehat x)]-\min_x f(x)$ achievable by any randomized algorithm. The central issue is how this error depends on dimension $d$, budget $T$, and smoothness $\beta$. Higher-order regularity is measured in multilinear operator norm, with all remaining class parameters fixed independently of $d$ and $T$; \cref{sec:setup} gives the precise assumptions.

Optimization from function values is a classical problem in stochastic approximation and a basic model for optimization without derivative access \citep{kiefer1952stochastic,spall1992multivariate,larson2019derivativefree}. Its statistical difficulty comes from estimating useful directions from noisy scalar observations. Higher smoothness allows more accurate gradient approximations, but each observation must still convey information about a $d$-dimensional objective. This paper determines the dimension cost that remains unavoidable even when the objective has arbitrarily high, fixed finite smoothness.

The effect of smoothness on accuracy is well understood. Local differences become less biased as their radius shrinks, while independent observation noise is amplified. Higher-order cancellation improves this tradeoff and yields the characteristic error exponent $(\beta-1)/\beta$ for smoothness order $\beta>2$ \citep{polyak1990optimal,bach2016highly,akhavan2020exploiting}. Subsequent analyses sharpened the dependence on dimension \citep{novitskii2022improved,akhavan2024gradientfree}, but did not determine whether the best achievable dimension factor is necessary. Thus the remaining issue is whether better estimation can reduce the dimension cost, or whether the available lower bounds underestimate the information required by any algorithm.

To state this gap precisely, let $\cE_\beta(T,d)$ denote the minimax expected objective error in the model above, formally defined in \eqref{eq:risk}. All class parameters other than $d$ and $T$, including the finite order $\beta$, are fixed. For this smoothness model, the bounds of \citet{akhavan2024gradientfree}, expressed in individual function evaluations, give
\begin{equation}\label{eq:prior-bracket-intro}
  \Omega_\beta\!\left(dT^{-\frac{\beta-1}{\beta}}\right)
  \ \le\ \cE_\beta(T,d)\ \le\
  O_\beta\!\left(d^{2-\frac{2}{\beta}}T^{-\frac{\beta-1}{\beta}}\right)
\end{equation}
in the common high-budget regime $T\gtrsim_\beta d^\beta$. The dimension exponents agree at $\beta=2$, but leave a factor $d^{1-2/\beta}$ unresolved for every $\beta>2$.

We prove that the upper dimension dependence is necessary. For every fixed finite $\beta>2$ and all integers $d,T\ge1$, \cref{thm:main} establishes
\begin{equation}\label{eq:intro-main}
  \cE_\beta(T,d)
  =\Theta_\beta\!\left(\min\left\{1,
  d^{2-\frac{2}{\beta}}T^{-\frac{\beta-1}{\beta}}\right\}\right).
\end{equation}
The new lower bound holds for arbitrary randomized adaptive algorithms, with unrestricted query points and outputs, and matches the upper order established by \citet{akhavan2024gradientfree}. Equivalently, the minimax number $Q_\beta^*(d,\eps)$ of observations needed for sufficiently small expected objective error $\eps>0$ satisfies
\begin{equation}\label{eq:intro-query}
  Q_\beta^*(d,\eps)
  =\Theta_\beta\!\left(d^2\eps^{-\beta/(\beta-1)}\right).
\end{equation}
Higher smoothness therefore improves the accuracy exponent, while the dimension cost remains quadratic. The characterization also identifies the transition at $T\asymp d^2$: below this scale the worst-case error remains of constant order. \Cref{tab:comparison} situates the result within the progression of higher-order smoothing bounds and records smoothness assumptions that affect dimension-dependent comparisons.

The main obstacle is to enforce this information cost without restricting where an algorithm queries. A global linear perturbation of a quadratic is unsuitable: sufficiently distant observations can amplify its hidden coefficients. Moreover, separately maximizing the signal of each hidden coordinate can overestimate how much information a single query reveals. Testing-based optimization lower bounds provide a natural framework for converting limited information into error \citep{agarwal2012information,shamir2013complexity}; obtaining the sharp dimension factor here requires a hard family whose smoothness and aggregate signal can both be controlled independently of dimension.

Our construction places a linear perturbation with normalized direction $s/\sqrt d$, $s\in\{-1,+1\}^d$, inside one shared radial cutoff of radius $h$. Outside that ball, every hypothesis agrees with the same quadratic. Inside it, a query may emphasize one coordinate, but cannot simultaneously maximize the signal of all $d$ coordinates: the sum of squared differences from the one-bit neighboring hypotheses is at most $O_\beta(h^{2\beta}/d)$. Summing these signals before applying the adaptive Gaussian divergence calculation bounds the average information per bit by $O_\beta(Th^{2\beta}/d^2)$. Choosing $h\asymp_\beta\min\{1,(d^2/T)^{1/(2\beta)}\}$ leaves a constant fraction of the bits unresolved. Their displaced minimizers then force the error in \eqref{eq:intro-main}. The shared localization both prevents distant queries from escaping the information bound and preserves dimension-independent derivative bounds.

For completeness, we also provide a finite multiscale realization of the matching upper rate. It combines spherical smoothing with cancellation across a fixed collection of radii, building on the smoothing and higher-order estimation principles in \citet{flaxman2005online,nesterov2017random,bach2016highly}. This yields bias $O_\beta(h^{\beta-1})$ and second moment $O_\beta(d^2/h^2)$, which weighted projected stochastic gradient descent balances without a logarithmic loss. This construction supports the matching characterization; the new complexity conclusion comes from the lower bound. After discussing related work, we state the model and main result in \cref{sec:setup}, prove the new lower bound in \cref{sec:lower}, and discuss the fixed-order scope and endpoint in \cref{sec:scope}. The complete upper-bound construction and proof are deferred to \cref{sec:upper}.

\begin{table}[!t]
\centering
\setlength{\belowcaptionskip}{8pt}
\caption{Selected expected objective-error bounds for noisy strongly convex zeroth-order optimization with fixed $\beta>2$. Here $T$ counts individual noisy values, and other class parameters are fixed. All rows display leading terms in the common regime $T\gtrsim_\beta d^\beta$; the all-budget saturation of our rate is given in \cref{thm:main}. A dash means that no lower bound is compared from the selected result.}
\label{tab:comparison}
\small
\begingroup
\renewcommand{\arraystretch}{1.22}
\setlength{\tabcolsep}{4pt}
\begin{tabularx}{\textwidth}{@{}>{\raggedright\arraybackslash}p{0.215\textwidth}>{\centering\arraybackslash}p{0.28\textwidth}>{\centering\arraybackslash}p{0.18\textwidth}>{\raggedright\arraybackslash}X@{}}
\toprule
Work & Upper bound & Lower bound & Smoothness / scope \\
\midrule
\citet{bach2016highly}
& $O_\beta\!\bigl(d^{\frac{2(\beta-1)}{\beta+1}}T^{-\frac{\beta-1}{\beta+1}}\bigr)$
& ---
& Integer $\beta\ge3$; derivative smoothness; unconstrained \\
\addlinespace[5pt]
\citet{akhavan2020exploiting}
& $O_\beta(d^2T^{-\frac{\beta-1}{\beta}})$
& $\Omega_\beta(dT^{-\frac{\beta-1}{\beta}})$
& Taylor remainder; lower bound for a queried output \\
\addlinespace[5pt]
\citet{novitskii2022improved}
& $O_\beta(d^{2-\frac{1}{\beta}}T^{-\frac{\beta-1}{\beta}})$
& ---
& Taylor remainder; compact domain, locally Lipschitz objective \\
\addlinespace[5pt]
\citet{akhavan2024gradientfree}
& $O_\beta(d^{2-\frac{2}{\beta}}T^{-\frac{\beta-1}{\beta}})$
& $\Omega_\beta(dT^{-\frac{\beta-1}{\beta}})$
& Derivative H\"older in operator norm; arbitrary output \\
\midrule
\textbf{This Paper}\newline (\cref{thm:main})
& \multicolumn{2}{c}{$\boldsymbol{\Theta_\beta\!\left(d^{2-\frac{2}{\beta}}T^{-\frac{\beta-1}{\beta}}\right)}$}
& Same smoothness; new lower bound matches the upper rate \\
\bottomrule
\end{tabularx}
\endgroup
\end{table}

\section{Related Works}

\paragraph{Stochastic approximation and random perturbations.}
Optimization from noisy function values originates with \citet{kiefer1952stochastic}. Higher-order finite differences and refined asymptotic analyses appear in \citet{fabian1967stochastic}, while \citet{spall1992multivariate} develops simultaneous perturbation estimates using a small number of evaluations per iteration. \citet{dippon2003accelerated} studies randomized gradient estimates that exploit higher smoothness and analyzes their asymptotic distributions. These results establish the statistical value of smoothness and randomized directions; a dimension-sharp, nonasymptotic minimax characterization requires tracking how information is distributed across coordinates. For a broader account of derivative-free methods and their oracle assumptions, see \citet{larson2019derivativefree}.

\paragraph{Higher-order smoothness and dimension dependence.}
\citet{polyak1990optimal} establish the characteristic budget exponent $T^{-(\beta-1)/\beta}$. \citet{bach2016highly} develop kernel-weighted spherical differences for highly smooth optimization in stochastic and online settings. The dimension dependence of nonasymptotic strongly convex guarantees is subsequently improved by \citet{akhavan2020exploiting,novitskii2022improved,akhavan2024gradientfree}; \cref{tab:comparison} summarizes selected bounds. The single-node specialization of \citet{akhavan2021distributed} independently attains the $d^{2-1/\beta}$ dimension factor under scalar Taylor-remainder smoothness. That condition is also used by the 2020 and 2022 works, whereas \citet{akhavan2024gradientfree} analyze derivative-H\"older regularity in operator norm, including noninteger orders. Derivative-H\"older control implies a scalar Taylor-remainder bound with a constant depending only on $\beta$; the reverse comparison must not be presumed dimension-free \citep[Remark~2]{akhavan2024gradientfree}. Their sphere-based kernels and Poincar\'e analysis provide our direct upper benchmark. Our contribution is the matching lower dimension dependence for this operator-norm class; interpolation across finitely many radii supplies a self-contained upper construction. More recently, \citet{akhavan2026derivatives} study minimization of a partial derivative from noisy values of its primitive. Their direct-optimization specialization has upper dimension factor $d^{2-1/\beta}$, and their Remark~5.2 explicitly leaves sharp dimension dependence unresolved; it does not establish the operator-norm characterization proved here.

\paragraph{Smoothing and feedback models.}
Randomized smoothing connects value observations to gradients of a smoothed objective, both in bandit convex optimization \citep{flaxman2005online} and in gradient-free convex minimization \citep{nesterov2017random}. The statistical power of paired evaluations is studied by \citet{duchi2015optimal}, and \citet{shamir2017optimal} gives an optimal two-point method for convex Lipschitz objectives. Those guarantees depend on their feedback models: evaluating a shared random loss at two points differs from taking two scalar observations with fresh independent additive noise. In the latter model, differencing amplifies the noise as the radius shrinks, so two-point guarantees from a different oracle cannot be imported solely on the basis of the number of evaluations. Our complexity counts every independently corrupted scalar observation.

\paragraph{Minimax lower bounds and neighboring classes.}
Information-theoretic reductions connect stochastic optimization to statistical testing \citep{agarwal2012information}; hypercube testing is treated systematically by \citet{tsybakov2009introduction}. \citet{shamir2013complexity} already establishes a quadratic dimension cost in noisy value complexity for smooth strongly convex optimization. Our result shows that this cost persists under every fixed finite higher-order operator-norm smoothness condition considered here. The proof uses the same testing perspective, with a shared radial localization and an aggregate neighboring-signal estimate. A nearby matching result is due to \citet{yu2024stochastic}: for strongly convex functions with Hessians Lipschitz in Frobenius norm, their asymptotic minimax objective-error rate is $\Theta(dT^{-2/3})$ when the class parameters are fixed. A dimension-free operator-norm Hessian-Lipschitz constant does not imply a dimension-free Frobenius constant, so their result does not directly settle our $\beta=3$ class. Structural restrictions also change the problem: \citet{akhavan2025additive} establish the minimax rate $dT^{-(\beta-1)/\beta}$ for additive objectives under their assumptions, whereas our class imposes no such decomposition. These results identify optimal rates for neighboring classes; the new conclusion here concerns the dimension dependence for the general operator-norm class.

Local minimax analysis provides a complementary perspective: \citet{wang2018local} study noisy optimization in neighborhoods of reference functions on a compact domain. Their nearby objectives need not remain strongly convex even when the reference function is strongly convex; those local neighborhoods therefore differ from our uniformly strongly convex higher-order class.

\section{Problem formulation and main result}\label{sec:setup}

Fix a finite real number $\beta>2$ and define
\begin{equation}\label{eq:k-alpha}
  k:=\lceil\beta\rceil-1,
  \qquad
  \alpha:=\beta-k\in(0,1],
  \qquad
  p_\beta:=\frac{\beta-1}{\beta}.
\end{equation}
Thus $k$ is the largest integer strictly smaller than $\beta$; in particular, $\beta=4$ means $k=3$ and $\alpha=1$. We use the Euclidean norm and write $\Ball(r):=\{x\in\R^d:\norm{x}\le r\}$. For a $j$-linear form $A$, let
\[
  \norm{A}_{\op}
  :=\sup_{\norm{u_1},\ldots,\norm{u_j}\le1}
  \abs{A[u_1,\ldots,u_j]}.
\]

\begin{definition}[Normalized class]\label{def:class}
Let $\cF_\beta(d)$ be the class of functions $f:\R^d\to\R$ such that $f\in C^k$, the unique minimizer $x_f^\star$ belongs to $\Ball(1)$, and for all $x,y\in\R^d$,
\begin{align}
  f(y)&\ge f(x)+\ip{\nabla f(x)}{y-x}+\frac12\norm{y-x}^2,
  \label{eq:strong-convexity}\\
  \norm{\nabla f(x)-\nabla f(y)}&\le 3\norm{x-y},
  \label{eq:gradient-lipschitz}\\
  \norm{D^kf(x)-D^kf(y)}_{\op}&\le \norm{x-y}^{\alpha}.
  \label{eq:high-smoothness}
\end{align}
\end{definition}
The constants in \cref{def:class} are independent of $d,T$, and $\eps$.  They are chosen only for notational clarity; \cref{app:parameters} gives the fixed-parameter extension.  For $\beta=4$, \eqref{eq:high-smoothness} is operator-norm Lipschitzness of $D^3f$.  Since $\nabla f(x_f^\star)=0$, \eqref{eq:gradient-lipschitz} also implies the dimension-free local bound $\norm{\nabla f(z)}\le9$ on $\Ball(2)$.

At call $t$, an algorithm chooses $z_t\in\R^d$ adaptively and observes
\begin{equation}\label{eq:oracle}
  Y_t=f(z_t)+\xi_t,
  \qquad
  \xi_t\stackrel{\mathrm{iid}}{\sim}N(0,1).
\end{equation}
The algorithm may use arbitrary computation, memory, and independent randomization.  Query points and the final output are unrestricted.  Every scalar observation in \eqref{eq:oracle} counts as one raw query; in particular, evaluations at two points use independent noises.

Define the minimax optimization error
\begin{equation}\label{eq:risk}
  \cE_\beta(T,d)
  :=\inf_{\cA:\,\le T\text{ raw calls}}
  \sup_{f\in\cF_\beta(d)}
  \E_f\!\left[f(\widehat x)-f(x_f^\star)\right].
\end{equation}
The corresponding minimax raw query complexity is
$Q_\beta^*(d,\eps):=\inf\{T\in\mathbb N:\cE_\beta(T,d)\le\eps\}$.

\begin{theorem}[Sharp minimax rate]\label{thm:main}
For every fixed finite $\beta>2$, there exist constants $0<c_\beta\le C_\beta<\infty$, independent of $d$ and $T$, such that for all integers $d,T\ge1$,
\begin{equation}\label{eq:main-theorem}
  c_\beta
  \min\left\{1,\left(\frac{d^2}{T}\right)^{p_\beta}\right\}
  \le \cE_\beta(T,d)\le
  C_\beta
  \min\left\{1,\left(\frac{d^2}{T}\right)^{p_\beta}\right\}.
\end{equation}
The lower bound holds for every randomized, fully adaptive algorithm with unrestricted query points.  The upper bound uses
$2(1+\lfloor(k-1)/2\rfloor)$ raw values per stochastic-gradient iteration and queries only inside $\Ball(2)$.
\end{theorem}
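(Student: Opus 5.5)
The proof splits into the lower bound, where the new work lies, and the upper bound, via a multiscale spherical estimator with projected SGD.

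\emph{Lower bound.} We use the hypercube construction described in the introduction. Fix a smooth radial bump $\psi:\R^d\to[0,1]$ with $\psi\equiv1$ on $\Ball(1/2)$ and $\psi\equiv0$ outside $\Ball(1)$, and set $\psi_h(x)=\psi(x/h)$. For $s\in\{-1,+1\}^d$ define
\[
  f_s(x)=\tfrac12\norm{x}^2+\gamma\,h^{\beta-1}\,\psi_h(x)\,\ip{s/\sqrt d}{x},
\]
with a small constant $\gamma=\gamma_\beta$ and $h\le1$.

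First we check membership $f_s\in\cF_\beta(d)$. The $j$-th derivative of $\psi_h(x)\ip{v}{x}$ with $\norm v=1$ has operator norm $O_\beta(h^{1-j})$, uniformly in $d$, because $\psi$ is radial. Hence:
\begin{itemize}
  \item the Hessian perturbation is $O(\gamma h^{\beta-2})\le 1/2$, which gives strong convexity $1/2$ and smoothness $3$;
  \item the $k$-th derivative has Hölder-$\alpha$ seminorm $O(\gamma h^{\beta-1}h^{-k-\alpha+1})=O(\gamma)$, by interpolating the sup bounds on $D^k$ and $D^{k+1}$ at scale $h$.
\end{itemize}
The minimizer is $x_s^\star\approx-\gamma h^{\beta-1}s/\sqrt d$. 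It lies inside $\Ball(h/2)$, so it is exactly this point, and it lies in $\Ball(1)$.

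Next we bound the error through the coordinates. The function $f_s$ is $1/2$-strongly convex and its minimizers are explicit. For any output $\widehat x$, we get $f_s(\widehat x)-f_s(x_s^\star)\ge\frac14\norm{\widehat x-x_s^\star}^2$. This is at least $c\gamma^2h^{2\beta-2}d^{-1}$ times the number of coordinates $i$ with $\mathrm{sign}(\widehat x_i)\ne s_i$.

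Then we bound the information, which is the key step. Let $s^{(i)}$ denote $s$ with bit $i$ flipped. The chain rule for adaptive Gaussian observations gives
\[
  \KL(P_s\|P_{s^{(i)}})=\tfrac12\E_s\sum_t\bigl(f_s(z_t)-f_{s^{(i)}}(z_t)\bigr)^2 .
\]
Summing over $i$ pointwise in $z$, we have $\sum_i(f_s-f_{s^{(i)}})^2(z)=4\gamma^2h^{2\beta-2}\psi_h(z)^2\norm{z}^2/d\le 4\gamma^2h^{2\beta}/d$. The point is that $\sum_i z_i^2=\norm z^2\le h^2$ on the support. Therefore the average over $i$ of $\KL$ is at most $2\gamma^2Th^{2\beta}/d^2$.

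We then apply Assouad's lemma in averaged form, combining Pinsker with Jensen over $i$ and a uniform prior on $s$. When $Th^{2\beta}/d^2\le c$, a constant fraction of the bits is misidentified with constant probability. The expected error is then $\gtrsim h^{2\beta-2}$.

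Finally we choose $h=\min\{1,(d^2/T)^{1/(2\beta)}\}$. This gives error $\gtrsim\min\{1,(d^2/T)^{p_\beta}\}$. Randomization of the algorithm is handled by conditioning on its internal seed.

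\emph{Upper bound.} The constant branch is trivial: output $0$, which gives error $\le\frac32\norm{x^\star}^2\le3/2$.

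Otherwise we run projected SGD on $\Ball(1)$ with step $\eta_t=2/(t+1)$ and weighted averaging, using a gradient estimator $g$ with bias $b$ and second moment $\sigma^2$. This yields error $O(\sigma^2/T+b)$.

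The estimator draws $\zeta$ uniform on the sphere and uses a fixed set of radii $r_jh$, $j=0,\dots,m$ with $m=\lfloor (k-1)/2\rfloor$. For each radius it takes the antisymmetric pair $f(x\pm r_jh\zeta)$, and forms
\[
  g=\frac{d}{2h}\sum_j w_j\,\frac{Y_j^+-Y_j^-}{r_j}\,\zeta .
\]
This uses $2(m+1)$ raw values, and $x\pm r_jh\zeta\in\Ball(2)$. Its mean is $\sum_jw_j\nabla\hat f_{r_jh}$, where $\hat f_r$ is the ball-smoothed function. Expanding in Taylor series, $\nabla\hat f_r-\nabla f$ has odd-order terms only, of order $r^{2\ell}$ for $2\ell\le k-1$. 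We choose $w$ to solve the Vandermonde system $\sum_jw_j=1$ and $\sum_j w_jr_j^{2\ell}=0$ for $1\le\ell\le m$. The bias is then controlled by the Hölder remainder, $O_\beta(h^{\beta-1})$. This bound must be computed in operator norm after averaging over the ball, and that is the delicate step: the remainder has to be integrated against directional monomials without picking up a factor of $d$.

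For the variance, the noise part is $O(d^2/h^2)$. The function-difference part is $O(d^2h^{-2}\,\E\ip{\nabla f}{h\zeta}^2)=O(d)$, using $\norm{\nabla f}\le9$.

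We take $h\asymp (d^2/T)^{1/(2\beta)}$, which is $\le1$ in the nontrivial regime. This balances $d^2/(h^2T)$ against $h^{2\beta-2}\cdot$(strong convexity) after squaring appropriately, giving error $O((d^2/T)^{p_\beta})$. I expect the bias cancellation, done dimension-free in operator norm, to be the main technical obstacle, together with ensuring that the localized bump's derivative bounds are $d$-independent.
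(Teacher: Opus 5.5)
Your route is the paper's on both halves: a normalized sign vector under one shared radial cutoff, the summed neighboring-signal identity, the adaptive Gaussian KL chain rule with Assouad--Pinsker, and the Lagrange-weighted multiscale spherical estimator with projected SGD. However, two steps fail as written.

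\emph{First, your hard family is not in $\cF_\beta(d)$.} The class requires modulus-$1$ strong convexity, i.e.\ $\nabla^2 f\succeq I$. You start from $\tfrac12\norm{x}^2$, whose Hessian is exactly $I$, and you conclude only ``strong convexity $1/2$''. This is not a matter of constants. Any nonzero compactly supported perturbation $\phi$ has $\nabla^2\phi$ with a negative eigenvalue somewhere; otherwise $\phi$ would be a convex function vanishing outside a bounded set, hence $\phi\equiv0$. So $\tfrac12\norm{x}^2+\phi$ is never $1$-strongly convex. A lower bound over the larger $\tfrac12$-strongly convex class also does not transfer to the normalized class. The fix is the paper's: use the base $\norm{x}^2$ (Hessian $2I$) and keep the perturbation Hessian of norm at most $1$, so that $I\preceq\nabla^2 f_s\preceq 3I$. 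The minimizer becomes $-\tfrac{\gamma}{2}h^{\beta-1}s/\sqrt d$, and the rest of your lower bound goes through unchanged.

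\emph{Second, your SGD guarantee is too weak.} The bound you state, $O(\sigma^2/T+b)$, is linear in the bias. With $b\asymp h^{\beta-1}$ and $\sigma^2\asymp d^2/h^2$ it balances at $h\asymp(d^2/T)^{1/(\beta+1)}$ and gives only $(d^2/T)^{(\beta-1)/(\beta+1)}$, not $(d^2/T)^{p_\beta}$. Your final balancing of $h^{2\beta-2}$ against $d^2/(h^2T)$ silently assumes a bound quadratic in the bias, and that needs an argument. In the one-step recursion, absorb the cross term $2\eta_t\ip{b_t}{x_t-x^\star}$ into part of the strong-convexity term via $2B\norm{x_t-x^\star}\le\tfrac12\norm{x_t-x^\star}^2+2B^2$. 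Then check that the remaining contraction still telescopes under index-weighted averaging. This yields $B^2+4M/(n+1)$, which is the paper's biased-SGD lemma with steps $4/(t+1)$.

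The two points you flag as the main obstacles are routine:
\begin{itemize}
\item $\norm{D^j[\chi(\norm{w}^2)]}_{\op}$ is dimension-free. The only nonzero derivatives of $\norm{w}^2$ are $2\ip{w}{\cdot}$ and $2\ip{\cdot}{\cdot}$, so the chain-rule expansion has a $d$-independent number of terms, each with operator norm bounded on the unit ball. Note that ``radial'' alone is not the reason; the reason is smoothness in $\norm{w}^2$.
\item The bias needs no integration against directional monomials. The duality $\ip{u}{D^{k-1}\nabla f(x)[z^{k-1}]}=D^kf(x)[z^{k-1},u]$ bounds the vector Taylor remainder pointwise by $C\norm{z}^{\beta-1}$ before averaging over the ball.
\end{itemize}
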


\begin{corollary}[Raw query complexity]\label{cor:query}
There exists $\eps_\beta>0$ such that, for every $0<\eps\le\eps_\beta$,
\begin{equation}\label{eq:query-complexity}
  Q_\beta^*(d,\eps)
  =\Theta_\beta\!\left(d^2\eps^{-\beta/(\beta-1)}\right).
\end{equation}
In particular, the correct dimension exponent is $2$, and no logarithmic loss is necessary.
\end{corollary}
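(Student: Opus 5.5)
The corollary follows by inverting the two-sided bound \eqref{eq:main-theorem} of \cref{thm:main} at a fixed target error $\eps$. Throughout I use $1/p_\beta=\beta/(\beta-1)$ and set $\eps_\beta:=c_\beta/2$, where $0<c_\beta\le C_\beta$ are the constants of \cref{thm:main}. The infimum defining $Q_\beta^*(d,\eps)$ is over a set of integers, so it suffices to show two things: the set is nonempty with an explicit element of the right order, and every element is at least of the right order. No monotonicity of $\cE_\beta(\cdot,d)$ in $T$ is needed.

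\emph{Upper bound.} Fix $0<\eps\le\eps_\beta$ and let $T_0:=\bigl\lceil d^2 (C_\beta/\eps)^{1/p_\beta}\bigr\rceil$.
\begin{itemize}
\item Since $T_0\ge d^2(C_\beta/\eps)^{1/p_\beta}$, we have $C_\beta(d^2/T_0)^{p_\beta}\le\eps$.
\item The upper half of \eqref{eq:main-theorem} then gives $\cE_\beta(T_0,d)\le C_\beta\min\{1,(d^2/T_0)^{p_\beta}\}\le\eps$.
\item Hence $Q_\beta^*(d,\eps)\le T_0$.
\end{itemize}
To remove the ceiling, note that $\eps\le c_\beta/2<C_\beta$ and $d\ge1$, so the unrounded quantity $d^2(C_\beta/\eps)^{1/p_\beta}$ is at least $1$. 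Therefore
\[
T_0\le 2d^2 (C_\beta/\eps)^{1/p_\beta}=2C_\beta^{\beta/(\beta-1)}\,d^2\eps^{-\beta/(\beta-1)}=O_\beta\bigl(d^2\eps^{-\beta/(\beta-1)}\bigr).
\]

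\emph{Lower bound.} Let $T$ be any integer with $\cE_\beta(T,d)\le\eps\le c_\beta/2$.
\begin{itemize}
\item The lower half of \eqref{eq:main-theorem} gives $c_\beta\min\{1,(d^2/T)^{p_\beta}\}\le\eps\le c_\beta/2$.
\item So the minimum is at most $1/2<1$. It cannot equal $1$, hence it is attained by the second term: $(d^2/T)^{p_\beta}\le\eps/c_\beta$.
\item Rearranging, $T\ge c_\beta^{1/p_\beta}\,d^2\eps^{-\beta/(\beta-1)}$.
\end{itemize}
Taking the infimum over such $T$ gives $Q_\beta^*(d,\eps)\ge c_\beta^{\beta/(\beta-1)} d^2\eps^{-\beta/(\beta-1)}$.

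Together the two bounds give \eqref{eq:query-complexity}, with constants $c_\beta^{\beta/(\beta-1)}$ and $2C_\beta^{\beta/(\beta-1)}$ that depend only on $\beta$. The dimension exponent $2$ and the absence of any logarithmic factor are inherited directly from \cref{thm:main}. The only point needing care is the choice $\eps_\beta<c_\beta$. It excludes the saturated branch $\min\{\cdot\}=1$ in the lower bound, and it keeps the ceiling harmless in the upper bound. There is no substantive obstacle beyond \cref{thm:main} itself.
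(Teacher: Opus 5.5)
Your proposal is correct and follows the paper's own argument: invert the two-sided bound of \cref{thm:main}, using $\eps<c_\beta$ to exclude the saturated branch for the lower bound and the budget $T=\lceil d^2(C_\beta/\eps)^{1/p_\beta}\rceil$ for the upper bound. Your choice $\eps_\beta=c_\beta/2$, together with the bound $\lceil x\rceil\le 2x$ for $x\ge1$, makes explicit the rounding step that the paper only sketches.
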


\begin{proof}
Write $p=p_\beta$, and let $c_\beta,C_\beta$ be the constants in \cref{thm:main}. For $0<\eps<c_\beta$, the inequality $\cE_\beta(T,d)\le\eps$ excludes the saturated branch and forces $T\ge d^2(c_\beta/\eps)^{1/p}$. Conversely, taking $T=\lceil d^2(C_\beta/\eps)^{1/p}\rceil$ makes the upper bound at most $\eps$. Restricting $\eps$ to a fixed sufficiently small interval absorbs the integer rounding into a constant depending only on $\beta$.
\end{proof}

\paragraph{The upper bound and the new lower bound.}
The upper rate in \cref{thm:main} is already attained by \citet[Corollary~21]{akhavan2024gradientfree}. Apply their result on $\Theta=\Ball(1)$, with diameter $2$, strong-convexity constant $1$, gradient-smoothness constant $3$, higher-order constant $1$, and noise variance $1$. Our global assumptions cover their perturbed query points, and $\norm{\nabla f(x)}\le6$ on $\Theta$. Independent Gaussian observations satisfy their noise assumptions. For $T\gtrsim_\beta d^2$, their budget condition holds and the additional $O_\beta(d/T)$ term is dominated by $(d^2/T)^{p_\beta}$. Counting each value in a pair as one query changes only a constant. For a self-contained verification of the all-budget statement, \cref{sec:upper} supplies a finite multiscale estimator, its projected stochastic-gradient analysis, and the small-budget branch that returns the origin. The main contribution is the matching lower bound proved next.

\section{Matching lower bound}\label{sec:lower}

We prove the lower half of \cref{thm:main} in three steps: construct a family with dimension-independent derivative bounds, control the information in every adaptive transcript, and convert uncertain signs into objective error. A compactly supported profile ensures that queries outside the hidden region reveal no additional information.  Fix a $C^\infty$ function $\chi:\R\to[0,1]$ satisfying
\begin{equation}\label{eq:cutoff-main}
  \chi(u)=1\quad(u\le1/4),
  \qquad
  \chi(u)=0\quad(u\ge1).
\end{equation}
For each unit vector $v\in\R^d$, define
\begin{equation}\label{eq:profile-main}
  F_v(w):=\ip{v}{w}\chi(\norm{w}^2).
\end{equation}

\begin{lemma}[Dimension-free radial profile]\label{lem:profile-main}
For every integer $j\ge0$, there exists $M_j<\infty$, depending only on $j$ and $\chi$, such that
\[
  \sup_{d\ge1}\sup_{\norm v=1}\sup_{w\in\R^d}
  \norm{D^jF_v(w)}_{\op}\le M_j.
\]
Moreover, there exists $H_\beta<\infty$, independent of $d$, such that
\begin{equation}\label{eq:profile-holder-main}
  \norm{D^kF_v(w)-D^kF_v(z)}_{\op}
  \le H_\beta\norm{w-z}^{\alpha}.
\end{equation}
\end{lemma}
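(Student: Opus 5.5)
Both claims reduce to bounding derivatives of the product $\ip{v}{w}\,g(w)$ with $g(w):=\chi(\norm{w}^2)$, uniformly in $d$. We will show that every derivative of each factor is a symmetrized sum of tensor products of copies of $v$, $w$ and the identity form, with a number of terms and scalar coefficients that do not depend on $d$.

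\textbf{Radial factor.} Write $\phi(w):=\norm{w}^2$. Then $D\phi(w)[u]=2\ip{w}{u}$, $D^2\phi[u_1,u_2]=2\ip{u_1}{u_2}$, and $D^i\phi=0$ for $i\ge3$. Each of these forms has operator norm at most $2\max\{1,\norm w\}$.

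By the multivariate Faà di Bruno formula, $D^jg(w)$ is a finite sum, indexed by set partitions of $\{1,\ldots,j\}$ into blocks of size $1$ or $2$, of terms
\[
\chi^{(m)}(\norm w^2)\prod_{\text{blocks}}D^{|B|}\phi(w)[u_B].
\]
Here $m$ is the number of blocks. The number of such partitions depends only on $j$.

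On the support of $\chi^{(m)}(\norm{w}^2)$ we have $\norm w\le1$. Each factor is therefore bounded by $2$ in operator norm, so $\norm{D^jg(w)}_{\op}\le N_j\,2^j\max_{m\le j}\norm{\chi^{(m)}}_\infty$, with $N_j$ the partition count. Off the support every term vanishes. This bound is free of $d$.

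\textbf{Product with the linear factor.} Set $\ell(w)=\ip vw$. We have $|\ell(w)|\le\norm w$, $D\ell=\ip v\cdot$ has norm $1$, and higher derivatives vanish. Leibniz gives
\[
D^jF_v[u_1,\ldots,u_j]=\ell(w)\,D^jg[u_1,\ldots,u_j]+\sum_{i}\ip v{u_i}\,D^{j-1}g[u_{-i}].
\]
On the support $\norm w\le1$, so $\norm{D^jF_v}_{\op}\le A_j+jA_{j-1}=:M_j$, where $A_j$ is the bound on $D^jg$. Outside the unit ball $F_v$ vanishes identically, and so do all its derivatives.

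\textbf{Hölder bound \eqref{eq:profile-holder-main}.} We split by the size of $\norm{w-z}$.
\begin{itemize}
\item If $\norm{w-z}\le1$, the mean value inequality along the segment gives $\norm{D^kF_v(w)-D^kF_v(z)}_{\op}\le M_{k+1}\norm{w-z}\le M_{k+1}\norm{w-z}^\alpha$. Here we use $\alpha\in(0,1]$; the operator norm is the right one to use because $\sup_t\norm{D^{k+1}F_v}_{\op}$ controls the increment of $D^kF_v$ in operator norm.
\item If $\norm{w-z}>1$, bound the difference by $2M_k\le2M_k\norm{w-z}^\alpha$.
\end{itemize}
Hence $H_\beta:=\max\{M_{k+1},2M_k\}$ works, with $k$ determined by $\beta$.

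\textbf{Main obstacle.} The delicate point is to keep the Faà di Bruno bookkeeping in operator norm rather than entrywise. The tensors $w^{\otimes}$ and $v^{\otimes}$ and the identity all have unit-scale operator norm in any dimension, whereas Frobenius norms would grow with $d$. We will handle this by evaluating each term on unit vectors $u_i$ and applying Cauchy–Schwarz factor by factor.
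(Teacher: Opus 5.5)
Your proposal is correct and follows essentially the same route as the paper: Faà di Bruno over partitions into blocks of size one or two, with each factor bounded by $2$ on the unit ball; the Leibniz rule with the linear factor, giving $M_j=A_j+jA_{j-1}$; and the split at $\norm{w-z}=1$, giving $H_\beta=\max\{2M_k,M_{k+1}\}$. The only difference is cosmetic: your bound $N_j2^j\max_{m\le j}\norm{\chi^{(m)}}_\infty$ is a slightly cruder form of the paper's $K_j=\sum_{\pi}2^{|\pi|}\norm{\chi^{(|\pi|)}}_\infty$.
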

The proof, given in \cref{app:profile}, expands derivatives of $\chi(\norm{w}^2)$ over set partitions with blocks of size one or two.  The number of terms depends on derivative order but not on dimension.  This is exactly where using multilinear operator norm, rather than Frobenius norm, matters.

Choose
\begin{equation}\label{eq:amplitude-main}
  0<a_\beta\le \min\left\{\frac14,M_2^{-1},H_\beta^{-1}\right\},
\end{equation}
with zero reciprocals omitted.  Write $a=a_\beta$.  For $s\in\{-1,+1\}^d$, set $v_s=s/\sqrt d$.  Given $h\in(0,1]$, define
\begin{equation}\label{eq:hard-main}
  f_s(x)
  :=\norm{x}^2-a h^\beta F_{v_s}(x/h)
  =\norm{x}^2-a h^{\beta-1}\ip{v_s}{x}
    \chi(\norm{x}^2/h^2).
\end{equation}

\begin{lemma}[Admissibility and exact minimizers]\label{lem:admissible-main}
Every $f_s$ in \eqref{eq:hard-main} belongs to $\cF_\beta(d)$.  Its unique global minimizer is
\begin{equation}\label{eq:minimizer-main}
  x_s^\star=\frac a2h^{\beta-1}v_s.
\end{equation}
Moreover, $f_s(x)=\norm{x}^2$ for every $\norm{x}\ge h$, independently of $s$.
\end{lemma}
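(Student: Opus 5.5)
The lemma asserts three things: agreement with the quadratic outside $\Ball(h)$, the defining inequalities of \cref{def:class}, and the exact minimizer. I would verify them in that order. The first is immediate from the cutoff. For $\norm{x}\ge h$ we have $\norm{x}^2/h^2\ge1$, so $\chi(\norm{x}^2/h^2)=0$ by \eqref{eq:cutoff-main}, and therefore $f_s(x)=\norm{x}^2$ for every $s$.

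Next I check membership in $\cF_\beta(d)$. The function $f_s$ is $C^\infty$. Write $g(x)=a h^\beta F_{v_s}(x/h)$. By the chain rule, $D^jg(x)=a h^{\beta-j}(D^jF_{v_s})(x/h)$.

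For $j=2$, \cref{lem:profile-main} gives
\[
\norm{D^2g}_{\op}\le a h^{\beta-2}M_2\le a M_2\le 1,
\]
using $h\le1$, $\beta>2$ and \eqref{eq:amplitude-main}. Hence the Hessian of $f_s$, namely $2I-D^2g$, has eigenvalues in $[1,3]$. Integrating along segments gives the strong-convexity inequality \eqref{eq:strong-convexity} with modulus $1$ and the gradient-Lipschitz bound \eqref{eq:gradient-lipschitz} with constant $3$.

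For \eqref{eq:high-smoothness}, note that $k\ge2$. If $k\ge3$, then $D^k\norm{x}^2=0$, so only $g$ contributes. If $k=2$, then $D^2\norm{x}^2=2I$ is constant and cancels in the difference. In either case \eqref{eq:profile-holder-main} gives
\[
\norm{D^kf_s(x)-D^kf_s(y)}_{\op}=a h^{\beta-k}\norm{D^kF_{v_s}(x/h)-D^kF_{v_s}(y/h)}_{\op}\le a H_\beta h^{\beta-k}\,h^{-\alpha}\norm{x-y}^\alpha .
\]
Since $\beta-k-\alpha=0$ and $aH_\beta\le1$, this is at most $\norm{x-y}^\alpha$. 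Strong convexity also gives existence and uniqueness of the global minimizer.

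Finally I identify the minimizer. Because the minimizer is unique and $f_s$ is strongly convex, it suffices to exhibit a stationary point. Set $x^\star=\tfrac a2h^{\beta-1}v_s$. Then
\[
\norm{x^\star}=\tfrac a2h^{\beta-1}\le \tfrac18 h^{\beta-1}\le \tfrac h8,
\]
so $\norm{x^\star}^2/h^2\le1/64<1/4$. On the open ball $\{\norm{x}<h/2\}$ we have $\chi\equiv1$, so there $f_s(x)=\norm{x}^2-a h^{\beta-1}\ip{v_s}{x}$. Its gradient $2x-a h^{\beta-1}v_s$ vanishes exactly at $x^\star$, which lies in that ball. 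Hence $x^\star$ is the unique global minimizer, and $\norm{x^\star}\le1/8\le1$ places it in $\Ball(1)$.

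The only delicate step is the dimension-free derivative control, and it is supplied entirely by \cref{lem:profile-main}. What remains is bookkeeping of the powers of $h$ together with the inequalities $h\le1$ and $\beta-2>0$. The $k=2$ case needs separate care only to confirm that the quadratic's constant Hessian drops out of the difference.
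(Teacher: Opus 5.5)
Your proposal is correct and follows the paper's proof essentially step for step. You bound the perturbation Hessian by $ah^{\beta-2}M_2\le1$ to obtain $I\preceq\nabla^2 f_s\preceq 3I$, derive the H\"older condition from the scaling $ah^{\beta-k}$, \eqref{eq:profile-holder-main} and $\alpha=\beta-k$ (the quadratic's $k$th derivative drops out), and locate the minimizer as the stationary point in the flat region of the cutoff, with uniqueness from strong convexity. The only difference is that you verify the support statement first rather than last.
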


\begin{proof}[Proof sketch]
The perturbation Hessian has norm at most $a h^{\beta-2}M_2\le1$, so
$I\preceq\nabla^2f_s\preceq3I$.  Its $k$th derivative increment is bounded by
\[
  a h^{\beta-k}H_\beta\norm{(x-y)/h}^{\alpha}
  =aH_\beta\norm{x-y}^{\alpha}
  \le\norm{x-y}^{\alpha}.
\]
Since $\norm{x_s^\star}=(a/2)h^{\beta-1}\le h/8$, the cutoff equals one in a neighborhood of $x_s^\star$, where
$\nabla f_s(x)=2x-ah^{\beta-1}v_s$.  Thus \eqref{eq:minimizer-main} is stationary and global strong convexity makes it the unique minimizer.  The support statement follows from \eqref{eq:cutoff-main}.  Full details appear in \cref{app:admissibility}.
\end{proof}

With class membership established, the remaining issue is how much one scalar query can reveal about the hidden signs. Bounding each neighbor separately would discard the common radial geometry. We instead aggregate all neighboring signals at the same query. Let $s^{(i)}$ be obtained from $s$ by flipping coordinate $i$.  Direct subtraction gives, for every $x\in\R^d$,
\begin{align}
  \sum_{i=1}^d\bigl(f_s(x)-f_{s^{(i)}}(x)\bigr)^2
  &=\frac{4a^2h^{2\beta-2}}{d}\norm{x}^2
    \chi(\norm{x}^2/h^2)^2 \notag\\
  &\le \frac{4a^2h^{2\beta}}{d}.
  \label{eq:summed-signal-main}
\end{align}
This uniform summed-signal inequality is the key step.  It prevents an adaptive learner from simultaneously extracting the maximal signal about every sign bit.

Fix any randomized adaptive algorithm and let $P_s$ denote the law of its full transcript, internal random seed, and output under $f_s$.  The conditional Gaussian KL identity gives
\begin{equation}\label{eq:adaptive-kl-main}
  \KL(P_s\Vert P_{s^{(i)}})
  =\frac12\E_s\sum_{t=1}^T
  \bigl(f_s(z_t)-f_{s^{(i)}}(z_t)\bigr)^2.
\end{equation}
Conditioned on the same history and seed, the action rule is common to both hypotheses; hence adaptivity introduces no additional likelihood-ratio term.  Summing \eqref{eq:adaptive-kl-main} over $i$, applying \eqref{eq:summed-signal-main} along the $P_s$ trajectory, and averaging over $s$ yields
\begin{equation}\label{eq:average-kl-main}
  \overline K
  :=\frac1{d2^d}\sum_s\sum_{i=1}^d
  \KL(P_s\Vert P_{s^{(i)}})
  \le \frac{2a^2Th^{2\beta}}{d^2}.
\end{equation}

Assouad's hypercube argument and Pinsker's inequality imply that every sign estimator $\widehat s$ satisfies
\begin{equation}\label{eq:assouad-main}
  \frac1{2^d}\sum_s\E_s\Ham(\widehat s,s)
  \ge \frac d2\left(1-\sqrt{\frac{\overline K}{2}}\right).
\end{equation}
Choose
\begin{equation}\label{eq:h-lower-main}
  h:=\min\left\{1,
  \left(\frac{d^2}{16a^2T}\right)^{1/(2\beta)}\right\}.
\end{equation}
Then $\overline K\le1/8$, so the average expected Hamming loss is at least $3d/8$.  Decode $\widehat s_i$ from the sign of $\widehat x_i$.  Each sign error incurs squared distance at least
$a^2h^{2\beta-2}/(4d)$ from \eqref{eq:minimizer-main}.  Strong convexity therefore gives
\begin{equation}\label{eq:lower-final-main}
  \frac1{2^d}\sum_s
  \E_s\!\left[f_s(\widehat x)-f_s(x_s^\star)\right]
  \ge \frac{3a^2}{64}h^{2\beta-2}
  \gtrsim_\beta
  \min\left\{1,\left(\frac{d^2}{T}\right)^{p_\beta}\right\}.
\end{equation}
This proves the lower half of \cref{thm:main}.  The complete transcript calculation and testing reduction are supplied in \cref{app:lower-full}.

\section{Scope, boundaries, and interpretation}\label{sec:scope}

\paragraph{The result is fixed-order and fixed-parameter.}
The notation $\Theta_\beta$ permits constants depending on the fixed finite order $\beta$.  More generally, the same exponents hold for fixed positive noise variance, fixed strong-convexity and smoothness constants, and a fixed minimizer-radius bound, provided the class is nondegenerate; see \cref{app:parameters}.  We do not claim a jointly sharp formula in all of these parameters, nor a theorem uniform as $\beta\to\infty$ or the noise variance tends to zero.

\paragraph{$\beta=2$ is the natural boundary.}
The rate specializes to $\Theta(dT^{-1/2})$ at the level of exponents, agreeing with the usual globally smooth strongly convex problem \citep{shamir2013complexity,akhavan2024gradientfree}. The endpoint class must, however, be defined separately: keep strong convexity $1$ and gradient Lipschitz constant $3$, and omit \eqref{eq:high-smoothness}. Substituting $\beta=2$ directly into that extra condition would instead impose gradient Lipschitz constant $1$ and collapse the class to quadratics with Hessian $I$, for which unrestricted distant queries change the problem. For the nondegenerate endpoint class, the lower construction remains valid with perturbation amplitude $ah^2$ and sufficiently small $a$, while the single-scale estimator has bias at most $3h$; the same information and SGD arguments give $\Theta(\min\{1,d/\sqrt T\})$. For $1<\beta<2$, the naive global extension is empty.

\begin{proposition}[No globally strongly convex sub-Lipschitz-gradient class]\label{prop:beta-below-two}
Let $0<\nu<1$.  There is no differentiable function $f:\R^d\to\R$ that is globally $\mu$-strongly convex for some $\mu>0$ and whose gradient satisfies
$\norm{\nabla f(x)-\nabla f(y)}\le L\norm{x-y}^{\nu}$ for a finite $L$ and all $x,y\in\R^d$.
\end{proposition}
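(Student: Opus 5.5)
The plan is to compare the two growth rates the hypotheses force along a ray. Strong convexity forces the gradient to grow at least linearly; the Hölder condition with exponent $\nu<1$ caps that growth at a sublinear rate. These are incompatible at large distances.

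Suppose, for contradiction, that such an $f$ exists. Fix any point $x$ and any unit vector $u$, and set $y=x+ru$ with $r>0$.

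\emph{Lower bound from strong convexity.} Write the strong-convexity inequality at $(x,y)$ and at $(y,x)$ and add them. This gives the monotonicity bound
\[
\ip{\nabla f(y)-\nabla f(x)}{y-x}\ge\mu\norm{y-x}^2=\mu r^2 .
\]

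\emph{Upper bound from the Hölder condition.} By Cauchy--Schwarz and the assumed gradient Hölder bound,
\[
\ip{\nabla f(y)-\nabla f(x)}{y-x}\le \norm{\nabla f(y)-\nabla f(x)}\,r\le L r^{1+\nu}.
\]

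\emph{Conclusion.} Combining the two displays gives $\mu r^2\le L r^{1+\nu}$, that is, $r^{1-\nu}\le L/\mu$ for every $r>0$. Since $1-\nu>0$, the left side tends to infinity as $r\to\infty$, which is a contradiction. Hence no such $f$ exists.

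The only point needing care is the first step. Strong convexity is assumed in the first-order form, as in \eqref{eq:strong-convexity} with constant $\mu$ in place of $1$, and we use only differentiability of $f$. Symmetrizing that inequality yields the monotonicity bound directly, so no second derivatives are required. The case $d\ge1$ is handled uniformly by this argument, and no result from earlier in the paper is needed.
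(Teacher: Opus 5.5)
Your proof is correct and follows essentially the same route as the paper. Both arguments use strong convexity with Cauchy--Schwarz to get the linear lower bound $\norm{\nabla f(x)-\nabla f(y)}\ge\mu\norm{x-y}$, and then let $r\to\infty$ against the sublinear H\"older bound $L r^{\nu}$; the only difference is that you compare the inner products, at orders $r^2$ and $r^{1+\nu}$, instead of the gradient norms, at orders $r$ and $r^{\nu}$.
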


\begin{proof}
Strong convexity and Cauchy--Schwarz imply
$\norm{\nabla f(x)-\nabla f(y)}\ge\mu\norm{x-y}$.  Combining the two inequalities gives
$\mu r\le Lr^\nu$ for every distance $r>0$, or $\mu r^{1-\nu}\le L$, which fails as $r\to\infty$.
\end{proof}
Thus a meaningful $1<\beta<2$ theory requires changing the model, for example by restricting the domain to a bounded convex set.  It is not an omitted case of \cref{thm:main}.

\paragraph{Why the lower bound is not contradicted by far-away queries.}
All hypotheses in \eqref{eq:hard-main} coincide with the same quadratic outside $\Ball(h)$.  Querying farther away therefore reveals no information about $s$.  Inside the support, \eqref{eq:summed-signal-main} is uniform over the query location.  The lower bound consequently covers unrestricted queries without imposing a span, precision, memory, or computational restriction.

\paragraph{Why neighboring results do not close the same class.}
The Frobenius-norm Lipschitz-Hessian condition in \citet{yu2024stochastic} is geometrically stronger than a dimension-free operator-norm condition.  Exact values, common-noise two-point differences, derivative observations, and additive objectives also define different statistical experiments.  The theorem here concerns fresh independent noisy scalar values and expected final optimization error, not cumulative bandit regret.

\section{Conclusion}

For noisy zeroth-order optimization of globally strongly convex functions with fixed finite derivative-H\"older smoothness in multilinear operator norm, the minimax dimension exponent is exactly two in raw query complexity.  The polynomial gap in the direct prior benchmark is closed by a stronger lower bound, while a finite multiscale estimator gives a matching self-contained upper bound.  The decisive lower-bound principle is to localize a normalized hypercube with one shared radial cutoff and to aggregate neighboring signals before applying the adaptive information inequality.  This mechanism may be useful in other oracle models where a learner can concentrate a query in one direction but cannot maximize information about all hidden coordinates simultaneously.

\printbibliography

\clearpage
\appendix

\section{Dimension-free radial profile}\label{app:profile}

For completeness, one explicit cutoff satisfying \eqref{eq:cutoff-main} is obtained from
\[
  \eta(t)=
  \begin{cases}
    \exp(-1/t),&t>0,\\
    0,&t\le0,
  \end{cases}
  \qquad
  \chi(u)=\frac{\eta(1-u)}{\eta(1-u)+\eta(u-1/4)}.
\]
The denominator is positive for every $u$, and $\chi\in C^\infty$ with bounded derivatives.

\begin{proof}[Proof of \cref{lem:profile-main}]
Let $q(w)=\norm{w}^2$.  Its only nonzero derivatives are
\[
  Dq(w)[u]=2\ip{w}{u},
  \qquad
  D^2q(w)[u,u']=2\ip{u}{u'}.
\]
On the support of $\chi(q(w))$, $\norm{w}\le1$, so both multilinear operator norms are at most $2$.  Repeated differentiation gives
\begin{equation}\label{eq:partition-expansion-app}
  D^j(\chi\circ q)(w)[u_1,\ldots,u_j]
  =\sum_{\pi\in\mathscr P_j}
  \chi^{(|\pi|)}(q(w))
  \prod_{B\in\pi}D^{|B|}q(w)[u_i:i\in B],
\end{equation}
where $\mathscr P_j$ is the finite set of partitions of $\{1,\ldots,j\}$ into blocks of size one or two.  Formula \eqref{eq:partition-expansion-app} follows inductively: differentiating the outer factor creates a singleton block, while differentiating a first derivative of $q$ creates a pair; all higher derivatives of $q$ vanish.

Set $K_0=\|\chi\|_\infty=1$. For $j\ge1$, unit directions, and $\norm{w}\le1$,
\[
  \abs{D^j(\chi\circ q)(w)[u_1,\ldots,u_j]}
  \le K_j
  :=\sum_{\pi\in\mathscr P_j}
  2^{|\pi|}\norm{\chi^{(|\pi|)}}_\infty.
\]
Because $w\mapsto\ip{v}{w}$ is linear, the product rule yields, for $j\ge1$,
\[
  \norm{D^jF_v(w)}_{\op}\le K_j+jK_{j-1},
\]
uniformly over dimension and unit $v$.  Outside the unit ball, $F_v$ and all derivatives vanish.  Hence the first claim holds with $M_0=1$ and $M_j=K_j+jK_{j-1}$.

The fundamental theorem of calculus and the derivative bounds give
\[
  \norm{D^kF_v(w)-D^kF_v(z)}_{\op}
  \le\min\{2M_k,M_{k+1}\norm{w-z}\}.
\]
If $\norm{w-z}\le1$, use $\norm{w-z}\le\norm{w-z}^{\alpha}$; if it is larger than one, use $1\le\norm{w-z}^{\alpha}$.  Therefore \eqref{eq:profile-holder-main} holds with
$H_\beta=\max\{2M_k,M_{k+1}\}$.
\end{proof}

For $\beta=4$, writing $A_j=\norm{\chi^{(j)}}_\infty$, direct expansion gives the useful explicit estimates
\begin{equation}\label{eq:explicit-M-app}
  M_2\le6A_1+4A_2,
  \qquad
  M_4\le60A_2+80A_3+16A_4.
\end{equation}
Thus $D^3F_v$ is $M_4$-Lipschitz in multilinear operator norm, uniformly in $d$.

\section{Admissibility of the hard family}\label{app:admissibility}

\begin{proof}[Full proof of \cref{lem:admissible-main}]
The perturbation in \eqref{eq:hard-main} is $-ah^\beta F_{v_s}(x/h)$.  Its Hessian has operator norm at most
\[
  ah^{\beta-2}M_2\le aM_2\le1,
\]
because $h\le1$ and $\beta>2$.  Since the base quadratic has Hessian $2I$,
\[
  I\preceq\nabla^2f_s(x)\preceq3I
  \qquad\text{for all }x\in\R^d.
\]
This proves \eqref{eq:strong-convexity} and \eqref{eq:gradient-lipschitz}.

The $k$th derivative of the perturbation scales as $ah^{\beta-k}$.  The $k$th derivative increment of the base quadratic is zero: if $k=2$ it is constant, and if $k>2$ it vanishes.  Therefore, by \eqref{eq:profile-holder-main},
\begin{align*}
  \norm{D^kf_s(x)-D^kf_s(y)}_{\op}
  &\le ah^{\beta-k}H_\beta
  \norm{(x-y)/h}^{\alpha}\\
  &=aH_\beta h^{\beta-k-\alpha}
  \norm{x-y}^{\alpha}
  \le\norm{x-y}^{\alpha},
\end{align*}
since $\alpha=\beta-k$ and $aH_\beta\le1$.

Now
\[
  \norm{x_s^\star}
  =\frac a2h^{\beta-1}
  \le\frac18h^{\beta-1}
  \le\frac h8<\frac h2.
\]
Hence $\norm{x_s^\star}^2/h^2<1/4$, so the cutoff is identically one in a neighborhood of $x_s^\star$.  In this neighborhood,
\[
  \nabla f_s(x)=2x-ah^{\beta-1}v_s,
\]
and \eqref{eq:minimizer-main} is stationary.  Global strong convexity makes it the unique global minimizer.  It lies in $\Ball(1)$.  Finally, if $\norm{x}\ge h$, then $\chi(\norm{x}^2/h^2)=0$, and $f_s(x)=\norm{x}^2$.
\end{proof}

\section{Full adaptive lower-bound proof}\label{app:lower-full}

\subsection{Gaussian transcript divergence}

Fix an arbitrary randomized adaptive algorithm.  Its internal randomness may be sampled at time zero as a seed $R$ whose law is common to all hypotheses.  Let
\[
  H_{t-1}=(R,z_1,Y_1,\ldots,z_{t-1},Y_{t-1})
\]
be the history before call $t$, and let $z_t$ be any measurable function of $H_{t-1}$. Let $P_s$ denote the joint law of $R$, the full transcript, and the output under $f_s$; any randomness used to form the output is included in $R$.

\begin{lemma}[Adaptive Gaussian KL identity]\label{lem:adaptive-kl-app}
For every neighboring pair $(s,s^{(i)})$,
\[
  \KL(P_s\Vert P_{s^{(i)}})
  =\frac12\E_s\sum_{t=1}^T
  \bigl(f_s(z_t)-f_{s^{(i)}}(z_t)\bigr)^2.
\]
\end{lemma}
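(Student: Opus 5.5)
The plan is to apply the chain rule for KL divergence along the transcript, treating the seed, the query choices and the observations as a sequence of random variables. The joint law $P_s$ factors as the law of $R$, which is common to both hypotheses, followed for each $t$ by the conditional law of $z_t$ given $H_{t-1}$ and then the conditional law of $Y_t$ given $(H_{t-1},z_t)$. Any final output randomness is folded into $R$, so the output is a deterministic function of $H_T$ and contributes nothing.

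\textbf{Step 1: factor the densities.} Since $z_t$ is a measurable function of $H_{t-1}$, its conditional law is a Dirac mass that does not depend on $s$. Given $(H_{t-1},z_t)$, the observation satisfies $Y_t\sim N(f_s(z_t),1)$, because the noise $\xi_t$ is fresh and independent of the past. I would write the density of $P_s$ with respect to a common dominating measure: the law of $R$, times point masses for the $z_t$, times Lebesgue measure for the $Y_t$. The $R$ and $z_t$ factors are identical under $s$ and $s^{(i)}$ and cancel in the likelihood ratio, leaving
\[
\log\frac{dP_s}{dP_{s^{(i)}}}=\sum_{t=1}^T\Bigl[-\tfrac12(Y_t-f_s(z_t))^2+\tfrac12(Y_t-f_{s^{(i)}}(z_t))^2\Bigr].
\]
If the action rule is itself randomized beyond $R$, the same cancellation holds because that kernel is common to both hypotheses.

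\textbf{Step 2: take expectations term by term.} Under $P_s$, write $Y_t=f_s(z_t)+\xi_t$ and set $\Delta_t=f_s(z_t)-f_{s^{(i)}}(z_t)$. The $t$-th summand becomes $\tfrac12(\xi_t+\Delta_t)^2-\tfrac12\xi_t^2=\xi_t\Delta_t+\tfrac12\Delta_t^2$. Because $\Delta_t$ is $H_{t-1}$-measurable and $\xi_t$ is independent of $H_{t-1}$ with mean zero, the tower property gives $\E_s[\xi_t\Delta_t]=0$. Summing over $t$ yields the claimed identity.

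\textbf{Main obstacle: integrability and measure-theoretic care.} The delicate point is justifying the tower-property step for general measurable adaptive rules. For the construction in \eqref{eq:hard-main}, $\abs{\Delta_t}\le 2ah^{\beta}$ is uniformly bounded, since the two functions differ only inside $\Ball(h)$ by $\abs{ah^{\beta-1}\ip{v_s-v_{s^{(i)}}}{x}}\chi\le 2ah^{\beta}/\sqrt d$. Hence every term is integrable, the log-likelihood ratio lies in $L^1(P_s)$, and $\KL$ equals its expectation. The general fact that conditionals which are Dirac masses or common kernels cancel can be cited as the standard chain rule for KL divergence on product spaces with regular conditional distributions.
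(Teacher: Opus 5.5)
Your proposal is correct and follows essentially the same route as the paper. Both use the chain rule for relative entropy along the transcript: the seed and action kernels cancel because they are common to both hypotheses, and each Gaussian observation contributes $\tfrac12\Delta_t^2$. Your explicit log-likelihood computation, with the vanishing cross term $\E_s[\xi_t\Delta_t]=0$, is just that chain rule unpacked.

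The only detail the paper adds is padding algorithms that use fewer than $T$ calls with ignored queries at the origin, where all $f_s$ agree.
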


\begin{proof}
Conditioned on a common realized history, the action-selection kernel is identical under both hypotheses.  The conditional observation laws are
$N(f_s(z_t),1)$ and $N(f_{s^{(i)}}(z_t),1)$, whose KL divergence is half the squared difference of means.  Applying the chain rule for relative entropy and then taking expectation under $P_s$ gives the result.  Appending the algorithm's output through a common conditional kernel contributes zero additional KL.  Algorithms using fewer than $T$ calls may be padded by ignored calls at zero, where every $f_s$ has the same value.
\end{proof}

For a neighboring pair,
\[
  f_s(x)-f_{s^{(i)}}(x)
  =-\frac{2ah^{\beta-1}}{\sqrt d}s_ix_i
  \chi(\norm{x}^2/h^2).
\]
Therefore \eqref{eq:summed-signal-main} holds.  Applying it at every adaptive query along the $P_s$ trajectory gives
\begin{align*}
  \sum_{i=1}^d\KL(P_s\Vert P_{s^{(i)}})
  &=\frac12\E_s\sum_{t=1}^T
  \sum_{i=1}^d
  \bigl(f_s(z_t)-f_{s^{(i)}}(z_t)\bigr)^2\\
  &\le \frac{2a^2Th^{2\beta}}{d}.
\end{align*}
Averaging over $s$ and dividing by $d$ yields \eqref{eq:average-kl-main}.

\subsection{Hypercube testing}

\begin{lemma}[Assouad reduction]\label{lem:assouad-app}
For any estimator $\widehat s\in\{-1,+1\}^d$ based on the transcript,
\[
  \frac1{2^d}\sum_s\E_s\Ham(\widehat s,s)
  \ge\frac d2\left(1-\sqrt{\frac{\overline K}{2}}\right).
\]
\end{lemma}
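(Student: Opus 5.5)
The plan is the standard coordinatewise Assouad argument, combining Le Cam's two-point bound with Pinsker's inequality and closing with Jensen's inequality applied to the averaged KL quantity $\overline K$.

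\emph{Step 1: decompose the Hamming loss by coordinate.} Write $\Ham(\widehat s,s)=\sum_{i=1}^d\mathbf 1\{\widehat s_i\neq s_i\}$. Pair each $s$ with its neighbor $s^{(i)}$; the hypercube is partitioned into $2^{d-1}$ such pairs for each fixed $i$. For a fixed pair $(s,s^{(i)})$ with $s_i=+1$, the event $\{\widehat s_i\neq s_i\}$ under $s$ and the event $\{\widehat s_i\neq s^{(i)}_i\}$ under $s^{(i)}$ are complementary, because they are $\{\widehat s_i=-1\}$ and $\{\widehat s_i=+1\}$. Hence
\[
P_s(\widehat s_i\neq s_i)+P_{s^{(i)}}(\widehat s_i\neq s^{(i)}_i)\ge 1-\TV(P_s,P_{s^{(i)}}).
\]
Summing over $i$ and over pairs, and using that each ordered neighboring pair appears once from each side, gives
\[
\frac1{2^d}\sum_s\E_s\Ham(\widehat s,s)\ge\frac12\sum_{i=1}^d\Bigl(1-\frac1{2^d}\sum_s\TV(P_s,P_{s^{(i)}})\Bigr).
\]
Here $\TV$ is symmetric, so averaging over all $s$ rather than over unordered pairs is harmless.

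\emph{Step 2: Pinsker and Jensen.} Pinsker's inequality gives $\TV(P_s,P_{s^{(i)}})\le\sqrt{\KL(P_s\Vert P_{s^{(i)}})/2}$. The average of $\TV$ over all $d2^d$ pairs $(s,i)$ is then at most the average of $\sqrt{\KL/2}$. By concavity of the square root (Jensen's inequality), this is at most $\sqrt{\overline K/2}$, with $\overline K$ exactly as defined in \eqref{eq:average-kl-main}. Substituting this into Step 1 yields $\frac d2\bigl(1-\sqrt{\overline K/2}\bigr)$.

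\emph{Step 3: measurability.} I would also check that $\widehat s$ is a measurable function of the transcript and the seed, so that the events above are measurable under the common sample space on which $P_s$ is defined; this follows from the setup in \cref{lem:adaptive-kl-app}.

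There is no real obstacle; the only care needed is in the pairing bookkeeping of Step 1. The point is that Jensen's inequality must be applied once, to the global average over all $(s,i)$, and not coordinatewise. Applying it to the global average is what allows the aggregated bound \eqref{eq:average-kl-main} to be used directly, even though individual coordinates may carry unequal information.
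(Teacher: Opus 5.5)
Your proposal is correct and follows essentially the same route as the paper: the edgewise bound $1-\TV(P_s,P_{s^{(i)}})$ along $i$th-coordinate edges, summed over edges and coordinates, then Pinsker in the direction $\KL(P_s\Vert P_{s^{(i)}})$ and a single concavity step over the global average that defines $\overline K$. Your Step 1 bookkeeping, including the factor $\tfrac12$ from passing from unordered pairs to the average over all $s$ via symmetry of $\TV$, reproduces the paper's intermediate inequality exactly.
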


\begin{proof}
Fix coordinate $i$ and pair vertices along the $i$th hypercube edges.  If $A$ is the event that the decision for bit $i$ equals $s_i$, then the sum of the two edgewise error probabilities is
\[
  1-P_s(A)+P_{s^{(i)}}(A)
  \ge1-\TV(P_s,P_{s^{(i)}}).
\]
Summing over undirected edges and coordinates gives
\[
  \frac1{2^d}\sum_s\E_s\Ham(\widehat s,s)
  \ge\frac d2\left(
  1-\frac1{d2^d}\sum_s\sum_i
  \TV(P_s,P_{s^{(i)}})
  \right).
\]
Pinsker's inequality
$\TV(P,Q)\le\sqrt{\KL(P\Vert Q)/2}$ and concavity of the square root then imply the claim.  Pinsker's inequality may, for example, be derived by applying the log-sum inequality to the event attaining total variation; see \citet{tsybakov2009introduction}.
\end{proof}

With $h$ as in \eqref{eq:h-lower-main}, \eqref{eq:average-kl-main} gives $\overline K\le1/8$, and therefore
\begin{equation}\label{eq:hamming-app}
  \frac1{2^d}\sum_s\E_s\Ham(\widehat s,s)
  \ge\frac{3d}{8}.
\end{equation}
Given any optimization output $\widehat x$, define
$\widehat s_i=+1$ when $\widehat x_i\ge0$ and $-1$ otherwise.  Since
\[
  (x_s^\star)_i
  =\frac{ah^{\beta-1}}{2\sqrt d}s_i,
\]
each sign error contributes at least $a^2h^{2\beta-2}/(4d)$ to squared Euclidean error.  By $1$-strong convexity,
\begin{align*}
  \frac1{2^d}\sum_s
  \E_s[f_s(\widehat x)-f_s(x_s^\star)]
  &\ge\frac1{2^{d+1}}\sum_s
  \E_s\norm{\widehat x-x_s^\star}^2\\
  &\ge\frac{a^2h^{2\beta-2}}{8d}
  \frac1{2^d}\sum_s\E_s\Ham(\widehat s,s)\\
  &\ge\frac{3a^2}{64}h^{2\beta-2}.
\end{align*}
If the minimum in \eqref{eq:h-lower-main} selects the second term, then
\[
  h^{2\beta-2}
  =\left(\frac{d^2}{16a^2T}\right)^{p_\beta}
  \ge\left(\frac{d^2}{T}\right)^{p_\beta}
\]
because $a\le1/4$.  If it selects one, the same quantity is at least the saturated minimum.  The finite family is fixed before the algorithm is run and is independent of the realized seed and queries.  Taking the supremum over $\cF_\beta(d)$ and then the infimum over algorithms proves the lower half of \cref{thm:main}.

\section{Matching upper bound}\label{sec:upper}

We now prove the upper half of \cref{thm:main}. The rate agrees with the benchmark of \citet{akhavan2024gradientfree}; the purpose here is a self-contained finite multiscale construction. A single central difference retains even-order smoothing bias. Combining finitely many radii cancels that bias, after which weighted projected stochastic gradient descent controls the residual error.

\subsection{A multiscale spherical estimator}

Let $U$ be uniform on the sphere $S^{d-1}$ and $V$ uniform on the unit ball $\Ball(1)$.  For $r>0$, define
\begin{equation}\label{eq:single-scale-main}
  G_r(x;U)
  :=\frac{d}{2r}
  \bigl(Y(x+rU)-Y(x-rU)\bigr)U,
\end{equation}
using two fresh noises.  By symmetry and the divergence theorem,
\begin{equation}\label{eq:ball-mean-main}
  m_r(x):=\E G_r(x;U)=\E_V\nabla f(x+rV).
\end{equation}

Set
\begin{equation}\label{eq:J-q-main}
  J:=\left\lfloor\frac{k-1}{2}\right\rfloor,
  \qquad q:=J+1,
  \qquad r_j:=2^{-j}\quad(0\le j\le J).
\end{equation}
A vector Taylor expansion of \eqref{eq:ball-mean-main} has the form
\begin{equation}\label{eq:mean-expansion-main}
  m_r(x)=\nabla f(x)+\sum_{\ell=1}^Jr^{2\ell}b_\ell(x)+e_r(x),
  \qquad
  \norm{e_r(x)}\le C_{\mathrm{tay},\beta}r^{\beta-1},
\end{equation}
where only even powers survive ball averaging and
\begin{equation}\label{eq:taylor-constant-main}
  C_{\mathrm{tay},\beta}
  =\frac{\Gamma(\alpha+1)}{\Gamma(\beta)}.
\end{equation}
Choose Lagrange weights
\begin{equation}\label{eq:weights-main}
  w_j:=\prod_{\substack{0\le\ell\le J\\\ell\ne j}}
  \frac{-r_\ell^2}{r_j^2-r_\ell^2},
\end{equation}
so that
\begin{equation}\label{eq:moment-cancellation-main}
  \sum_{j=0}^Jw_j=1,
  \qquad
  \sum_{j=0}^Jw_jr_j^{2\ell}=0
  \quad(1\le\ell\le J).
\end{equation}
At bandwidth $h\in(0,1]$, use one common direction $U$ and independent noises at all queried points, and set
\begin{equation}\label{eq:multiscale-main}
  \widehat g_h(x):=\sum_{j=0}^Jw_jG_{r_jh}(x;U).
\end{equation}
This estimator costs exactly $2q$ raw values.

\begin{proposition}[Bias and second moment]\label{prop:estimator-main}
For every $x\in\Ball(1)$ and $h\in(0,1]$,
\begin{align}
  \norm{\E\widehat g_h(x)-\nabla f(x)}
  &\le B_\beta h^{\beta-1},
  \label{eq:bias-main}\\
  \E\norm{\widehat g_h(x)}^2
  &\le V_\beta\frac{d^2}{h^2},
  \label{eq:moment-main}
\end{align}
where $B_\beta,V_\beta<\infty$ depend only on the fixed order $\beta$.
\end{proposition}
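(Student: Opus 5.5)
For the bias bound \eqref{eq:bias-main}, I would combine \eqref{eq:mean-expansion-main} with the moment cancellation \eqref{eq:moment-cancellation-main}. Since $\E\widehat g_h(x)=\sum_jw_jm_{r_jh}(x)$, the weights kill the terms $r^{2\ell}b_\ell(x)$ for $1\le\ell\le J$ and reproduce $\nabla f(x)$ exactly. The remainder is then bounded by
\[
  \sum_j\abs{w_j}C_{\mathrm{tay},\beta}(r_jh)^{\beta-1}\le C_{\mathrm{tay},\beta}\Bigl(\sum_j\abs{w_j}\Bigr)h^{\beta-1}.
\]
The weights depend only on $J$, hence only on $\beta$. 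So the real work is justifying \eqref{eq:mean-expansion-main} with a dimension-free remainder.

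For that step, write $\nabla f(x+rV)$ via the Taylor expansion of $\nabla f$ to order $k-1$ with an integral remainder controlled by \eqref{eq:high-smoothness}. The term of order $m$ is $\frac{1}{m!}D^{m+1}f(x)[rV,\ldots,rV,\cdot]$. Since $V$ is symmetric, odd $m$ vanish in expectation, and the even terms $m=2\ell\le k-1$ give $r^{2\ell}b_\ell(x)$ with $\ell\le J$. I would define $b_\ell(x)$ as the ball average of that term divided by $r^{2\ell}$; it is independent of $r$, so no bound on $b_\ell$ is needed. The remainder is
\[
  \frac{1}{(k-2)!}\int_0^1(1-\tau)^{k-2}\bigl(D^kf(x+\tau rV)-D^kf(x)\bigr)[rV,\ldots,rV,\cdot]\,d\tau.
\]
In operator norm this is at most $r^{k-1+\alpha}\norm{V}^{k-1+\alpha}$ times a Beta-type integral, which yields a constant of the form \eqref{eq:taylor-constant-main}. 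Here I use $\norm{V}\le1$ and the fact that the dual norm of a linear form is its operator norm, so no dimension enters. If $k=2$, the expansion is just $\nabla f(x)$ plus a Hölder remainder. The divergence-theorem identity \eqref{eq:ball-mean-main} is assumed from the text.

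For the second moment \eqref{eq:moment-main}, I would use $\E\norm{\sum_jw_jG_j}^2\le(\sum_j\abs{w_j})\sum_j\abs{w_j}\E\norm{G_{r_jh}}^2$ and bound each single-scale term. Write
\[
  G_r=\frac{d}{2r}\bigl(f(x+rU)-f(x-rU)\bigr)U+\frac{d}{2r}(\xi-\xi')U.
\]
The noise part has second moment $\frac{d^2}{4r^2}\cdot2=\frac{d^2}{2r^2}$. For the signal part, I would bound the difference by $\abs{f(x+rU)-f(x-rU)}\le2r\sup_{\Ball(2)}\norm{\nabla f}\le18r$ using the local gradient bound on $\Ball(2)$, since $x\in\Ball(1)$ and $r\le1$. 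This gives a contribution of at most $81d^2$. Since $r_jh\ge2^{-J}h$ and $h\le1$, both parts are $O_\beta(d^2/h^2)$.

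The crude gradient bound loses a factor $d$ compared with the Poincaré refinement, but it is already sufficient here. The main obstacle is therefore the dimension-free Taylor remainder in the bias. In particular, I would have to check that ball averaging of $D^{m+1}f(x)[V^{\otimes m},\cdot]$ needs no control beyond symmetry, and that noninteger $\alpha$ is handled by the integral remainder rather than a Lagrange form.
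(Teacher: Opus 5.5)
Your proposal is correct and follows essentially the paper's argument. Both use the integral-form vector Taylor expansion of $\nabla f$ with the Euclidean-duality bound on the remainder, which gives $C_{\mathrm{tay},\beta}=\Gamma(\alpha+1)/\Gamma(\beta)$, exact cancellation of the even terms by the Lagrange weights, and, for the second moment, the crude bound $\norm{\nabla f}\le 9$ on $\Ball(2)$ together with the per-scale noise variance $d^2/(2r_j^2h^2)$. The only difference is cosmetic: you combine scales via Cauchy--Schwarz, $\E\norm{\sum_j w_jG_{r_jh}}^2\le(\sum_j\abs{w_j})\sum_j\abs{w_j}\E\norm{G_{r_jh}}^2$, whereas the paper uses independence of the noises across scales to eliminate cross terms; your version gives a slightly larger constant, still depending only on $\beta$, and does not need that independence.
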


\begin{proof}[Proof sketch]
Equations \eqref{eq:mean-expansion-main} and \eqref{eq:moment-cancellation-main} cancel every explicit even Taylor term, leaving
\[
  B_\beta
  =C_{\mathrm{tay},\beta}
  \sum_{j=0}^J\abs{w_j}r_j^{\beta-1}.
\]
For the second moment, every query lies in $\Ball(2)$, where $\norm{\nabla f}\le9$.  The noise-free part of each $G_{r_jh}$ has norm at most $9d$.  The independent Gaussian difference at scale $j$ contributes variance $d^2/(2r_j^2h^2)$.  Hence one may take
\[
  V_\beta
  =81\left(\sum_j\abs{w_j}\right)^2
  +\frac12\sum_j\frac{w_j^2}{r_j^2}.
\]
The full Taylor and smoothing derivation is in \cref{app:upper-estimator}.
\end{proof}

\subsection{Weighted projected stochastic gradient descent}

Let $n$ be the number of stochastic-gradient iterations, and let $\cH_t$ contain all randomness and observations before iteration $t$. At each iteration draw a fresh direction $U_t$, independently of $\cH_t$, and use fresh noises at all $2q$ points. Starting from $x_1=0$, update
\begin{equation}\label{eq:algorithm-update-main}
  x_{t+1}
  =\Pi_{\Ball(1)}\!\left(
  x_t-\frac4{t+1}\widehat g_h(x_t)
  \right),
  \qquad 1\le t\le n,
\end{equation}
and return
\begin{equation}\label{eq:algorithm-output-main}
  \overline x_n
  :=\frac2{n(n+1)}\sum_{t=1}^ntx_t.
\end{equation}

\begin{lemma}[Biased stochastic-gradient bound]\label{lem:biased-sgd-main}
Suppose $f$ is $1$-strongly convex on $\Ball(1)$, its minimizer $x^\star$ belongs to $\Ball(1)$, and the conditional gradient estimates satisfy
\[
  \norm{\E[g_t\mid\cH_t]-\nabla f(x_t)}\le B,
  \qquad
  \E[\norm{g_t}^2\mid\cH_t]\le M.
\]
Then the iterates \eqref{eq:algorithm-update-main}, with $\widehat g_h(x_t)$ replaced by $g_t$, and output \eqref{eq:algorithm-output-main} satisfy
\begin{equation}\label{eq:biased-sgd-main}
  \E\!\left[f(\overline x_n)-f(x^\star)\right]
  \le B^2+\frac{4M}{n+1}.
\end{equation}
\end{lemma}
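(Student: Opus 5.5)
The plan is the standard weighted-averaging analysis of projected SGD for strongly convex objectives, with the bias absorbed into a quarter of the strong-convexity term. Write $\eta_t:=4/(t+1)$, $b_t:=\E[g_t\mid\cH_t]-\nabla f(x_t)$, $\Delta_t:=\norm{x_t-x^\star}^2$, and $\delta_t:=f(x_t)-f(x^\star)$. Because $x^\star\in\Ball(1)$, the projection is nonexpansive toward $x^\star$. Expanding the square then gives
\[
  \Delta_{t+1}\le\Delta_t-2\eta_t\ip{g_t}{x_t-x^\star}+\eta_t^2\norm{g_t}^2 .
\]
Since $x_t$ is $\cH_t$-measurable, I take conditional expectations given $\cH_t$. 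This replaces $g_t$ by $\nabla f(x_t)+b_t$ in the inner product and bounds the last term by $\eta_t^2M$.

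Next I lower bound the inner product. Strong convexity on $\Ball(1)$ applies because both $x_t$ and $x^\star$ lie in this convex set. It gives $\ip{\nabla f(x_t)}{x_t-x^\star}\ge\delta_t+\tfrac12\Delta_t$. For the bias I use Cauchy--Schwarz and Young's inequality:
\[
  \abs{\ip{b_t}{x_t-x^\star}}\le B\sqrt{\Delta_t}\le B^2+\tfrac14\Delta_t .
\]
Combining the two,
\[
  \ip{\nabla f(x_t)+b_t}{x_t-x^\star}\ge\delta_t+\tfrac14\Delta_t-B^2 .
\]
Substituting into the recursion and taking full expectations yields
\[
  2\eta_t\E\delta_t\le\Bigl(1-\tfrac{\eta_t}{2}\Bigr)\E\Delta_t-\E\Delta_{t+1}+2\eta_tB^2+\eta_t^2M .
\]
Dividing by $2\eta_t=8/(t+1)$ gives
\[
  \E\delta_t\le\frac{t-1}{8}\E\Delta_t-\frac{t+1}{8}\E\Delta_{t+1}+B^2+\frac{2M}{t+1}.
\]

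The main point, and the only step requiring care, is that the weight $t$ makes the distance terms telescope exactly for this step size. Multiplying by $t$ turns the distance terms into $\tfrac{t(t-1)}8\E\Delta_t-\tfrac{t(t+1)}8\E\Delta_{t+1}$. Summed over $1\le t\le n$, these collapse to $-\tfrac{n(n+1)}8\E\Delta_{n+1}\le0$, because the $t=1$ coefficient vanishes. This is why no initial-distance term appears. The remaining terms satisfy
\[
  \sum_{t=1}^n t\Bigl(B^2+\frac{2M}{t+1}\Bigr)\le\frac{n(n+1)}2B^2+2Mn .
\]
Finally, dividing by $\sum_t t=n(n+1)/2$ and applying Jensen's inequality to the convex $f$ at the weighted average $\overline x_n\in\Ball(1)$ gives
\[
  \E[f(\overline x_n)-f(x^\star)]\le B^2+\frac{4Mn}{n(n+1)}=B^2+\frac{4M}{n+1},
\]
which is \eqref{eq:biased-sgd-main}.
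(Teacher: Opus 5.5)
Your proof is correct and follows essentially the same route as the paper's proof: nonexpansive projection, strong convexity with the bias absorbed by Young's inequality to reach the contraction factor $1-\eta_t/2$, multiplication by $t$ so that the distance terms telescope with vanishing $t=1$ coefficient, and Jensen at the weighted average. Your version differs only cosmetically (Young's inequality applied before doubling, and the roles of $\Delta$ and $\delta$ swapped relative to the paper's notation).
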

The bias is absorbed into the strong-convexity term, and weights proportional to the iteration index telescope the remaining squared distances; see \cref{app:sgd}.

Applying \cref{prop:estimator-main,lem:biased-sgd-main} gives, for every fixed $h\in(0,1]$,
\begin{equation}\label{eq:upper-tradeoff-main}
  \sup_{f\in\cF_\beta(d)}
  \E\!\left[f(\overline x_n)-f(x_f^\star)\right]
  \le B_\beta^2h^{2\beta-2}
  +\frac{4V_\beta d^2}{(n+1)h^2}.
\end{equation}
For $n\ge d^2$, choose
\begin{equation}\label{eq:h-upper-main}
  h=\left(\frac{d^2}{n}\right)^{1/(2\beta)}.
\end{equation}
The two terms in \eqref{eq:upper-tradeoff-main} then have the same order $(d^2/n)^{p_\beta}$. For $T\ge2qd^2$, take $n=\lfloor T/(2q)\rfloor\ge d^2$; then $n\ge T/(4q)$ and at most $T$ raw values are used. For $T<2qd^2$, return $0$, whose loss is at most $3/2$ because $x_f^\star\in\Ball(1)$ and the gradient is $3$-Lipschitz. In this range, $\min\{1,(d^2/T)^{p_\beta}\}\ge(2q)^{-p_\beta}$. Thus the same rate holds after enlarging a constant depending only on $\beta$, including budgets too small for one full iteration. This proves the upper half of \cref{thm:main} for all $T$.

\subsection{The explicit estimator for \texorpdfstring{$\beta=4$}{beta = 4}}

When $\beta=4$, we have $J=1$, $(r_0,r_1)=(1,1/2)$, and $(w_0,w_1)=(-1/3,4/3)$.  Four fresh values give
\begin{equation}\label{eq:beta4-main}
  \widehat g_h(x)
  =\frac{d}{6h}
  \left[
  8\bigl(Y(x+\tfrac h2U)-Y(x-\tfrac h2U)\bigr)
  -\bigl(Y(x+hU)-Y(x-hU)\bigr)
  \right]U.
\end{equation}
The $r^2$ term in the ball-smoothed gradient cancels exactly.  Under the normalization in \cref{def:class},
\begin{equation}\label{eq:beta4-bounds-main}
  \norm{\E\widehat g_h(x)-\nabla f(x)}\le\frac{h^3}{12},
  \qquad
  \E\norm{\widehat g_h(x)}^2\le229\frac{d^2}{h^2}.
\end{equation}
Consequently,
\[
  \E[f(\overline x_n)-f(x^\star)]
  \le\frac{h^6}{144}+\frac{916d^2}{(n+1)h^2},
\]
and, for $n\ge d^2$, $h=(d^2/n)^{1/8}$ yields the stated $d^{3/2}n^{-3/4}$ rate.

\subsection{Estimator proof details}\label{app:upper-estimator}

\subsubsection{Ball-smoothing identity}

\begin{lemma}\label{lem:ball-smoothing-app}
For $G_r$ defined in \eqref{eq:single-scale-main},
$\E G_r(x;U)=\E_V\nabla f(x+rV)$.
\end{lemma}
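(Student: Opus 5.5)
We need to prove $\E G_r(x;U)=\E_V\nabla f(x+rV)$. The plan is to remove the noise, symmetrize the two-point difference, and then apply the divergence theorem to $f$ on the ball of radius $r$.

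First, since the two noises are fresh, mean zero, and independent of $U$, they contribute $\E[(\xi-\xi')U]=0$. This leaves
\[
  \E G_r(x;U)=\frac{d}{2r}\E\bigl[(f(x+rU)-f(x-rU))U\bigr].
\]
Next, $U$ and $-U$ have the same law, so $\E[f(x-rU)U]=-\E[f(x+rU)U]$. Hence
\[
  \E G_r(x;U)=\frac{d}{r}\E\bigl[f(x+rU)U\bigr].
\]
All expectations are finite because $f$ is continuous and therefore bounded on the compact sphere.

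The main step is the divergence (Gauss--Green) theorem, applied componentwise to $f\in C^1$ on the ball $x+\Ball(r)$. The outward unit normal at $x+ru$ is $u$, which gives
\[
  \int_{\Ball(r)}\nabla f(x+y)\,dy=\int_{\partial \Ball(r)}f(x+y)\,\frac{y}{r}\,dS(y).
\]
To convert to expectations, we use
\[
  \mathrm{vol}(\Ball(r))=\frac{r^d\omega_d}{d},
  \qquad
  \mathrm{area}(\partial\Ball(r))=r^{d-1}\omega_d,
\]
where $\omega_d$ is the surface area of $S^{d-1}$. Dividing the left side by the volume produces $\E_V\nabla f(x+rV)$. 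Dividing the right side by the same volume gives
\[
  \frac{d}{r^d\omega_d}\cdot r^{d-1}\omega_d\,\E[f(x+rU)U]=\frac{d}{r}\E[f(x+rU)U].
\]
This matches the expression obtained in the first step, which proves the identity.

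The only delicate point is the bookkeeping of the constant $d/r$, which comes from the ratio of surface area to volume. The regularity requirements are mild: $f\in C^k$ with $k\ge2$ gives $\nabla f$ continuous on the closed ball, which is enough for the divergence theorem.
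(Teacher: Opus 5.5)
Your proof is correct and follows the paper's argument: you drop the mean-zero noise, symmetrize via $U\mapsto -U$ to get $\frac{d}{r}\E[f(x+rU)U]$, and apply the divergence theorem componentwise, with the surface-area-to-volume ratio supplying the factor $d$. The paper additionally treats $d=1$ separately via the fundamental theorem of calculus on $[-1,1]$; your formula already covers that case if you take $\omega_1=2$, the counting measure on $S^0=\{-1,+1\}$.
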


\begin{proof}
The observation noises have zero mean.  Symmetry under $U\mapsto-U$ gives
\[
  \E G_r(x;U)=\frac dr\E_U[f(x+rU)U].
\]
Apply the divergence theorem componentwise to $v\mapsto f(x+rv)$ over the unit ball.  Since the surface-area-to-volume ratio of the Euclidean unit ball is $d$, the right-hand side equals $\E_V\nabla f(x+rV)$.  For $d=1$, the same identity is the fundamental theorem of calculus on $[-1,1]$.
\end{proof}

\subsubsection{Vector Taylor expansion}

Let $m=k-1\ge1$.  Applying the integral Taylor formula to the vector field $g=\nabla f$ gives
\begin{equation}\label{eq:vector-taylor-app}
  \nabla f(x+z)
  =\sum_{j=0}^{m}\frac1{j!}D^j\nabla f(x)[z^j]+R_x(z),
\end{equation}
with
\begin{equation}\label{eq:taylor-remainder-app}
  R_x(z)
  =\frac1{(m-1)!}\int_0^1(1-t)^{m-1}
  \left(D^m\nabla f(x+tz)-D^m\nabla f(x)\right)[z^m]dt.
\end{equation}
Here $D^m\nabla f$ is vector-valued, whereas $D^kf$ is a scalar $k$-linear form. Their identification uses Euclidean duality:
\[
  \ip{u}{D^m\nabla f(x)[z_1,\ldots,z_m]}
  =D^kf(x)[z_1,\ldots,z_m,u].
\]
Taking the supremum over $\norm u\le1$ shows that \eqref{eq:high-smoothness} controls the vector remainder without an additional dimension factor. Consequently,
\begin{align*}
  \norm{R_x(z)}
  &\le\frac{\norm z^{m+\alpha}}{(m-1)!}
  \int_0^1(1-t)^{m-1}t^\alpha dt\\
  &=\frac{\Gamma(\alpha+1)}{\Gamma(m+\alpha+1)}
  \norm z^{\beta-1}
  =C_{\mathrm{tay},\beta}\norm z^{\beta-1}.
\end{align*}
Averaging \eqref{eq:vector-taylor-app} at $z=rV$ eliminates every odd-degree term.  The surviving even powers are precisely $r^{2\ell}$ for $1\le\ell\le J$, and the averaged remainder has norm at most $C_{\mathrm{tay},\beta}r^{\beta-1}$.  This proves \eqref{eq:mean-expansion-main}.

\subsubsection{Cancellation weights and second moment}

The values $w_j$ in \eqref{eq:weights-main} are the Lagrange interpolation coefficients evaluating at zero a polynomial specified at nodes $r_j^2$.  Hence \eqref{eq:moment-cancellation-main} holds.  Combining it with \eqref{eq:mean-expansion-main} proves \eqref{eq:bias-main} with
\begin{equation}\label{eq:Bbeta-app}
  B_\beta
  :=C_{\mathrm{tay},\beta}
  \sum_{j=0}^J\abs{w_j}r_j^{\beta-1}.
\end{equation}

For $x\in\Ball(1)$ and $h\le1$, every line segment between $x-r_jhU$ and $x+r_jhU$ lies in $\Ball(2)$.  Because $x_f^\star\in\Ball(1)$ and the gradient is $3$-Lipschitz,
\[
  \norm{\nabla f(z)}
  \le3\norm{z-x_f^\star}\le9
  \qquad(z\in\Ball(2)).
\]
The mean-value theorem therefore bounds the noise-free part of each $G_{r_jh}$ by $9d$ in norm.  The noise component at scale $j$ is
\[
  \frac{d}{2r_jh}(\xi_{j,+}-\xi_{j,-})U,
\]
whose second moment is $d^2/(2r_j^2h^2)$.  The noises are independent across scales and have zero mean conditional on $U$, so all noise cross terms and signal--noise cross terms vanish.  Thus
\begin{align*}
  \E\norm{\widehat g_h(x)}^2
  &\le81d^2\left(\sum_j\abs{w_j}\right)^2
  +\frac{d^2}{2h^2}\sum_j\frac{w_j^2}{r_j^2}\\
  &\le V_\beta\frac{d^2}{h^2},
\end{align*}
where
\begin{equation}\label{eq:Vbeta-app}
  V_\beta
  :=81\left(\sum_j\abs{w_j}\right)^2
  +\frac12\sum_j\frac{w_j^2}{r_j^2}.
\end{equation}
This proves \cref{prop:estimator-main}.

\subsection{Biased stochastic-gradient telescope}\label{app:sgd}

\begin{proof}[Proof of \cref{lem:biased-sgd-main}]
Let
$R_t=\E\norm{x_t-x^\star}^2$ and
$\Delta_t=\E[f(x_t)-f(x^\star)]$.
Projection nonexpansiveness gives
\[
  \E[\norm{x_{t+1}-x^\star}^2\mid\cH_t]
  \le\norm{x_t-x^\star}^2
  -2\eta_t\ip{\E[g_t\mid\cH_t]}{x_t-x^\star}
  +\eta_t^2M,
\]
where $\eta_t=4/(t+1)$.  Strong convexity implies
\[
  \ip{\nabla f(x_t)}{x_t-x^\star}
  \ge f(x_t)-f(x^\star)+\frac12\norm{x_t-x^\star}^2.
\]
If $b_t=\E[g_t\mid\cH_t]-\nabla f(x_t)$, then
\[
  2\abs{\ip{b_t}{x_t-x^\star}}
  \le2B\norm{x_t-x^\star}
  \le\frac12\norm{x_t-x^\star}^2+2B^2.
\]
Taking expectations yields
\[
  R_{t+1}
  \le(1-\eta_t/2)R_t-2\eta_t\Delta_t
  +2\eta_tB^2+\eta_t^2M.
\]
Rearranging at $\eta_t=4/(t+1)$ gives
\[
  \Delta_t
  \le\frac{(t-1)R_t-(t+1)R_{t+1}}8
  +B^2+\frac{2M}{t+1}.
\]
Multiply by $t$ and sum from $1$ to $n$.  The distance terms telescope to
$-n(n+1)R_{n+1}/8\le0$, while
$\sum_{t=1}^n t/(t+1)\le n$.  Therefore
\[
  \sum_{t=1}^nt\Delta_t
  \le\frac{n(n+1)}2B^2+2Mn.
\]
Convexity of $f$ at the weighted average \eqref{eq:algorithm-output-main} proves
\[
  \E[f(\overline x_n)-f(x^\star)]
  \le B^2+\frac{4M}{n+1}.
\]
No small-step assumption is needed because projection and the uniform second-moment bound control the initial steps.
\end{proof}

\subsection{Explicit raw-call audit for \texorpdfstring{$\beta=4$}{beta = 4}}\label{app:beta4}

For $\beta=4$, the ball-smoothed gradient satisfies
\[
  m_r(x)=\nabla f(x)+r^2A_x+e_r(x),
  \qquad
  \norm{e_r(x)}\le\frac{r^3}{6},
\]
where
$A_x=\frac12\E_VD^3f(x)[V,V,\cdot]$.
The estimator \eqref{eq:beta4-main} equals
$(4G_{h/2}-G_h)/3$.  Hence
\[
  \norm{\E\widehat g_h(x)-\nabla f(x)}
  \le\frac13\left(4\frac{(h/2)^3}{6}+\frac{h^3}{6}\right)
  =\frac{h^3}{12}.
\]
Each iteration uses exactly the four independent scalar observations
\[
  Y(x+hU),\quad Y(x-hU),\quad
  Y(x+\tfrac h2U),\quad Y(x-\tfrac h2U).
\]
The deterministic component has norm at most
$9d(1/3+4/3)=15d$.  The noise component has second moment
\[
  \frac{d^2}{36h^2}\left(64\cdot2+1\cdot2\right)
  =\frac{65d^2}{18h^2}.
\]
Consequently
\[
  \E\norm{\widehat g_h(x)}^2
  \le225d^2+\frac{65d^2}{18h^2}
  \le229\frac{d^2}{h^2},
\]
which proves \eqref{eq:beta4-bounds-main}.  With $n$ iterations the exact raw query count is $4n$, not $n$ or $2n$.

\section{Other fixed nondegenerate parameters}\label{app:parameters}

Consider fixed constants
\[
  0<\mu<L_g<\infty,
  \qquad L_\beta>0,
  \qquad \sigma>0,
  \qquad R>0,
\]
and the analogous class with $\mu$-strong convexity, $L_g$-Lipschitz gradient, $L_\beta$ H\"older constant in \eqref{eq:high-smoothness}, minimizer in $\Ball(R)$, and Gaussian noise variance $\sigma^2$.  For every fixed choice of these constants and fixed finite $\beta>2$, the conclusions of \cref{thm:main} and \cref{cor:query} remain unchanged, with constants allowed to depend on $(\beta,\mu,L_g,L_\beta,\sigma,R)$.

For the lower bound, put
\[
  \lambda:=\frac{\mu+L_g}{2},
  \qquad
  \delta:=\frac{L_g-\mu}{2},
  \qquad
  \rho:=\min\{1,R\},
\]
and consider
\[
  f_s(x)
  :=\frac{\lambda}{2}\norm{x}^2
    -A(\rho h)^\beta F_{v_s}\!\left(\frac{x}{\rho h}\right),
  \qquad h\in(0,1].
\]
Choose the fixed amplitude $A>0$ so that
\[
  AM_2\le\delta,
  \qquad
  AH_\beta\le L_\beta,
  \qquad
  A\le\frac{\lambda}{4}.
\]
Because $\rho h\le1$, the perturbation Hessian has norm at most $AM_2$, while its $k$th derivative has H\"older constant at most $AH_\beta$.  Thus every member of the family is $\mu$-strongly convex and $L_g$-smooth and satisfies the required higher-order condition.  Its minimizer is
\[
  x_s^\star=\frac{A}{\lambda}(\rho h)^{\beta-1}v_s.
\]
The last amplitude condition places this point in the flat part of the cutoff and in $\Ball(R)$.  The neighboring-signal identity becomes
\[
  \sum_{i=1}^d
  \bigl(f_s(x)-f_{s^{(i)}}(x)\bigr)^2
  \le\frac{4A^2(\rho h)^{2\beta}}{d}.
\]
Since the KL divergence between $N(m,\sigma^2)$ and $N(m',\sigma^2)$ is $(m-m')^2/(2\sigma^2)$, choosing
\[
  h=\min\left\{1,
  \left(\frac{\sigma^2d^2}{16A^2\rho^{2\beta}T}\right)^{1/(2\beta)}
  \right\}
\]
and repeating \cref{app:lower-full} yields the same $d,T$ exponents.  The factors $A,\rho,\lambda,\mu$, and $\sigma$ enter only the fixed multiplicative constant.

For the upper bound, project onto $\Ball(R)$ and use radii $r_j\rho h$.  Every query then lies in $\Ball(R+\rho)$, where
$\norm{\nabla f}\le L_g(2R+\rho)$.  The multiscale estimator satisfies
\[
  \norm{\E\widehat g_h(x)-\nabla f(x)}
  \le C_\beta L_\beta(\rho h)^{\beta-1},
  \qquad
  \E\norm{\widehat g_h(x)}^2
  \le C\frac{d^2}{h^2},
\]
where $C$ depends only on the displayed fixed parameters and on $\beta$.  With steps $4/[\mu(t+1)]$, the general-$\mu$ version of \cref{lem:biased-sgd-main} gives
\[
  \E[f(\overline x_n)-f(x^\star)]
  \le \frac{B^2}{\mu}+\frac{4M}{\mu(n+1)}
  \le C'\left(h^{2\beta-2}+\frac{d^2}{nh^2}\right).
\]
For $T\ge2qd^2$, take $n=\lfloor T/(2q)\rfloor$ and $h=(d^2/n)^{1/(2\beta)}$, exactly as in \cref{sec:upper}. This uses at most $T$ values and gives error at most $C''(d^2/T)^{p_\beta}$. For $T<2qd^2$, return the origin: smoothness and $\norm{x^\star}\le R$ give $f(0)-f(x^\star)\le L_gR^2/2$, whereas $\min\{1,(d^2/T)^{p_\beta}\}\ge(2q)^{-p_\beta}$. Enlarging the fixed constant therefore proves the upper bound for every budget, including budgets smaller than one full iteration.

The strict inequality $\mu<L_g$ is a convenient nondegeneracy condition for the localized lower family.  If $L_g=\mu$, the globally $\mu$-strongly convex and $\mu$-smooth class collapses to quadratics with fixed Hessian, and unrestricted far-away queries can amplify linear coefficients.  Similarly, the fixed-positive-noise theorem is not uniform in the zero-noise limit.

\end{document}